\numberwithin{equation}{section}
\newtheorem{thm}{Theorem}[section]
\newtheorem{prop}[thm]{Proposition}
\newtheorem{exam}[thm]{Example}
\newtheorem{rem}[thm]{Remark}
\newtheorem{lem}[thm]{Lemma}
\newtheorem{assum}[thm]{Assumption}
\def\Capa{\mathop{\rm Cap}}
\def\F{{\cal F}}
\def\R{{\mathbb R}}
\def\Pp{{\mathbb P}}
\def\Ee{{\mathbb E}}
\def\d{{\rm d}}
\def\address#1#2{\begingroup
\noindent\parbox[t]{16cm}{%
\small{\scshape\ignorespaces#1}\par\vskip1ex
\noindent\small{\itshape E-mail address}%
\/: #2\par\vskip4ex}\hfill%
\endgroup}%
\title{Martingale Nature and Laws of the Iterated Logarithm for
Markov Processes of Pure-Jump Type}
\author{Yuichi Shiozawa\quad and \quad Jian Wang{\thanks{Corresponding author}}}
\begin{document}
\maketitle
\begin{abstract}
We present sufficient conditions, in terms of the jumping kernels,
for two large classes of conservative Markov processes
of pure-jump type to be
purely discontinuous martingales
with finite second moment.
As an application, we establish the law of the iterated logarithm
for sample paths of the associated processes.
\end{abstract}
\noindent
 AMS subject Classification:\  60J75, 47G20, 60G52.   \\
\noindent
 Keywords: Feller process; Hunt process; lower bounded semi-Dirichlet form;  martingale; jumping kernel;
law of the iterated logarithm.
 \vskip 1cm

\allowdisplaybreaks

\section{Introduction}

It is well known that
any symmetric L\'evy process with finite first moment
possesses the martingale property
because of the independent increments property.
Apart from L\'evy processes,
the martingale property was studied
for a one-dimensional diffusion process with natural scale
(see \cite{K06, SN18} and references therein).
Note that this process is a time changed Brownian motion
and thus possesses the local martingale property
(see, e.g., \cite[Proposition
V.1.5]{RY99}). In \cite{K06, SN18}, a necessary and sufficient condition is given for this process  to be a martingale by adopting the Feller theory.

To the best of our knowledge,
except for
these  Markov processes mentioned above,
answers are not available in the literature to the following \lq\lq fundamental\rq\rq question
---{\it when does a Markov process become a martingale{\rm ?}}
The aim of this paper is to present explicit sufficient conditions
for two large classes of jump processes to be
purely discontinuous martingales with finite second moment
in terms of jumping kernels.
As an application,
we show Khintchine's law of the iterated logarithm (LIL)
for two classes of non-symmetric jump processes.
We also provide examples of non-symmetric jump processes
which are
purely discontinuous martingales with finite second moment
and satisfy the LIL.

To derive the martingale property for a jump process,
we apply two different approaches.
One is based on the infinitesimal generator along with the
moments calculus of the process,
and the other relies on the
componentwise decomposition of the process with
the aid of the semimartingale theory
(\cite[Chapter II, Section 2]{JS03}).
The assumptions of our paper are mild.
For example, condition \eqref{eq:second-moment} (or \eqref{jjj}) means
the existence of the second moment for the jumping kernel
(which seems to be necessary for the LIL),
while condition \eqref{eq:zero-drift} (or \eqref{e:non-drift}) roughly
indicates that there is no drift arising from jumps of the process.

Our motivation lies in the fact that
the LIL holds for
L\'evy processes with zero mean
and finite second moment
as proved by Gnedenko \cite{Gn43} (see also \cite[Proposition 48.9]{Sa13}).
J.-G. Wang \cite{W93} established this kind of result
for locally square integrable martingales and
obtained Gnedenko's result as a corollary (\cite[Corollary 2]{W93}).
For a symmetric jump process generated by non-local Dirichlet form,
we provided in \cite{SW19+} a sufficient
condition, in terms of the jumping kernel, for the long time behavior of the sample path
being
similar to that of the Brownian motion.
This condition implies the existence of the second moment for the jumping kernel.
Our approach in \cite{SW19+} was based on the long time heat kernel estimate.
Recently, it is proved in \cite{BKKL19+} that for
a special symmetric jump process,
the second moment condition on the jumping kernel is equivalent
to the validity of the LIL.
The approach of \cite{BKKL19+} is based on
the two-sided heat kernel estimate for full times.
See \cite{KKW, SW17} for related discussions on this topic.
In contrast with
\cite{BKKL19+, KKW, SW17, SW19+},
our result is applicable to non-symmetric jump processes.
Moreover, our approach is elementary in the sense that
we use the martingale theory of stochastic processes.

\ \

The rest of this paper is organized as follows.
In Section \ref{section2}, we first consider the martingale property of a class of Feller processes of pure-jump type,
and then prove the LIL.
Some new examples including jump processes of variable order are also presented.
The corresponding discussions for non-symmetric Hunt processes
generated by lower bounded semi-Dirichlet forms of pure-jump type are considered in Section \ref{section3}.

Throughout this paper, the letters $c$ and $C$ (with subscript)
denote finite positive constants which may vary from place to place.
For $x\in {\mathbb R}^d$, let $x^{(i)}$ be its $i$th coordinate;
that is, $x=(x^{(1)},\dots, x^{(d)})\in {\mathbb R}^d$.
We denote by $\langle \cdot, \cdot \rangle$ the standard inner product on $\R^d$.
Let ${\cal B}(\R^d)$ and ${\cal B}_b(\R^d)$ denote, respectively,
the family of Borel measurable sets on $\R^d$
and the set of bounded Borel measurable functions on $\R^d$.
Let $C_c^{\infty}(\R^d)$
(resp.\ $C_c^2(\R^d)$)
be the set of smooth
(resp.\ twice continuously differentiable)
functions with compact support in $\R^d$, and let $C_b^2(\R^d)$ be the set of twice continuously differentiable functions on $\R^d$
with all bounded derivatives.
Let $C_{\infty}(\R^d)$ be the set of continuous functions on $\R^d$ vanishing at infinity.

\section{Martingale Nature for Feller processes}\label{section2}
\subsection{Preliminaries}\label{subsect:prelim}

Let
$X:=\{\Omega, \F,(\F_t)_{t\geq 0}, (X_t)_{t\geq 0}, (\Pp_x)_{x\in \R^d}\}$
be a time-homogeneous Markov process on $\R^d$.
Let $(T_t)_{t\geq 0}$ be
the Markov semigroup associated with the process $X$, i.e.,
$$
T_tu(x)=\Ee_x u(X_t),\quad u\in {\cal B}_b(\R^d),
(t,x)\in [0,\infty)\times \R^d.
$$
According to \cite[Definition 1.16]{BSW},
we call $X$ a Feller process if
$(T_t)_{t\geq 0}$ is a Feller semigroup on $C_\infty(\R^d)$; that is,
it satisfies the following properties:
\begin{itemize}
\item (Feller property)
for any $u\in C_{\infty}(\R^d)$ and  $t>0$, $T_tu\in C_{\infty}(\R^d)$;
\item (strong continuity)
for any $u\in C_{\infty}(\R^d)$, $\|T_tu-u\|_{\infty}\rightarrow 0$ as $t\rightarrow 0$.
\end{itemize}

In what follows, we suppose that $X$ is a Feller process on $\R^d$.
Let $(\R^d)_{\Delta}:=\R^d\cup\{\Delta\}$ be a one-point compactification of $\R^d$.
Then by \cite[Theorems 1.19 and 1.20]{BSW},
we may and do assume that $X$ satisfies the next properties:
\begin{itemize}
\item $X$ has a c\`{a}dl\`{a}g modification; that is, for every $x\in \R^d$,
a map $t\mapsto X_t(\omega)$ is
right continuous with left limits in $(\R^d)_{\Delta}$
for $\Pp_x$-a.s.\ $\omega\in \Omega$;
\item the filtration $(\F_t)_{t\geq 0}$ is complete and right continuous,
and $X$ is a strong Markov process with this filtration.
\end{itemize}

Define
$$D(L)=\left\{u\in C_{\infty}(\R^d)\, \Big|\,
\text{$\displaystyle \lim_{t\rightarrow 0}\frac{T_tu-u}{t}$ exists in $C_{\infty}(\R^d)$}\right\}$$
and
$$Lu= \lim_{t\rightarrow 0}\frac{T_tu-u}{t}, \quad u\in D(L).$$
The pair $(L,D(L))$ is called a Feller generator of the Feller semigroup $(T_t)_{t\geq 0}$.
If $C_c^{\infty}(\R^d)\subset D(L)$, then
the general form of $L$
is known (see, e.g.,  \cite[Theorem 2.21]{BSW}),
and $X$ enjoys
an analogous L\'evy-It\^{o} decomposition
(see, e.g., \cite[Theorem 3.5]{Sch} or \cite[Theorem 2.44]{BSW}).
Furthermore, by  \cite[Theorem 1.36]{BSW}, for any $u\in D(L)$,
\begin{align}\label{eq:martingale-u}
M_t^{[u]}:=u(X_t)-u(X_0)-\int_0^t Lu(X_s)\,\d s, \quad  t\geq 0,
\end{align}
is a martingale with respect to $({\F}_t)_{t\geq 0}$.

Throughout this section,
we impose the following conditions on the Feller generator $(L,D(L))$.
\begin{assum}\label{assum:generator}\it
Let $(L,D(L))$ be a Feller generator of the Feller semigroup associated with $X$
so that the next two conditions are satisfied:
\begin{enumerate}
\item[{\rm(i)}] $C_c^{\infty}(\R^d)\subset D(L)${\rm ;}
\item[{\rm(ii)}] for any $u\in C_c^{\infty}(\R^d)$,
\begin{align}\label{eq:generator}
Lu(x)=\int_{\R^d\setminus\{0\}}\left(u(x+z)-u(x)-\langle \nabla u(x), z\rangle{\bf 1}_{\{|z|<1\}}(z)\right)\,N(x,\d z),
\end{align}
where for any fixed $x\in \R^d$,
$N(x,\d z)$ is a non-negative deterministic measure on
$\R^d\setminus\{0\}$ such that
\begin{equation}\label{eq:second-moment}
\sup_{x\in \R^d}\int_{\R^d\setminus\{0\}} |z|^2
\, N(x,\d z)<\infty.
\end{equation}
\end{enumerate}
\end{assum}
We comment
on Assumption \ref{assum:generator}.
Since the generator $L$
in \eqref{eq:generator} consists of the jump part only,
Assumption \ref{assum:generator} implies that $X$ is a semimartingale of pure jump-type;
that is, there is no continuous part
in the semimartingale decomposition of $X$
(i.e., no diffusion term involved) (see, e.g., \cite[Theorem 2.44]{BSW}).
The kernel $N(x,\d z)$ is called the jumping kernel of $X$.
Note that by \eqref{eq:second-moment},
\begin{equation}\label{eq:first-moment}
\sup_{x\in \R^d}\int_{\{|z|\geq 1\}}|z|\,N(x,\d z)<\infty
\end{equation}
and so
\begin{equation}\label{eq:first-moment--}\sup_{x\in \R^d}\int_{\{|z|\geq 1\}}|z^{(i)}| \,N(x,\d z)<\infty, \quad 1\leq i\leq d.\end{equation}

Under the full conditions of Assumption \ref{assum:generator},
$X$ is conservative,
i.e.,
$T_t 1=1$ for any $t\geq 0$
(\cite[Theorem 2.33]{BSW}).
According to \cite[Theorem 2.37 c) and a)]{BSW},
$C_c^2(\R^d)\subset D(L)$
and the operator $(L,C_c^{\infty}(\R^d))$
has a unique extension to $C_b^2(\R^d)$,
which is still denoted by $(L,C_b^2(\R^d))$,
such that
the representation \eqref{eq:generator} remains true for this extension.
We see further by
\cite[Theorems 2.37 i) and 1.36]{BSW}
that
for any $u\in C_b^2(\R^d)$,
\eqref{eq:martingale-u} is also
a martingale with respect to $({\cal F}_t)_{t\geq 0}$.

\subsection{Martingale property of Feller processes}\label{Section2.2}
In this subsection,  we present a sufficient condition
on the jumping kernel $N(x,\d z)$ such that $X$ is a
purely discontinuous
martingale with finite second moment.

\begin{thm}\label{thm:markov-mtg}
Let Assumption {\rm \ref{assum:generator}} hold.
Assume also that
for any $x\in \R^d$,
\begin{align}\label{eq:zero-drift}
\int_{\{|z|\geq 1\}}z^{(i)} N(x,\d z)=0, \quad
1\leq i\leq d.
\end{align}
Then
$X$ is a
purely discontinuous martingale such that
for each $t>0$ and $i=1,\dots, d$, $X_t^{(i)}$ has finite second moment and
the
predictable quadratic variation
of $X$
is given by
\begin{align}\label{eq:cov}
\langle X^{(i)}, X^{(j)}\rangle_t
=\int_0^t \left(\int_{\R^d\setminus\{0\}}z^{(i)}z^{(j)}N(X_s,\d z)\right) \,\d s ,\quad 1\leq i,j\leq d,\,t>0.
\end{align}
\end{thm}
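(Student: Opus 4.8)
The strategy is to apply the martingale identity \eqref{eq:martingale-u} to coordinate-type functions and then remove the truncation and localization using the second-moment bound \eqref{eq:second-moment}. First I would fix $i$ and, for each $n\geq 1$, choose $\phi_n\in C_b^2(\R^d)$ with $\phi_n(x)=x^{(i)}$ on $\{|x|\leq n\}$, with $\nabla\phi_n$ and $D^2\phi_n$ uniformly bounded in $n$. Since $C_b^2(\R^d)\subset D(L)$ and the representation \eqref{eq:generator} persists there, $M_t^{[\phi_n]}=\phi_n(X_t)-\phi_n(X_0)-\int_0^t L\phi_n(X_s)\,\d s$ is an $(\F_t)$-martingale. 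On $\{|x|\leq n-1\}$ (so that $x+z$ and $x$ both lie in the region where $\phi_n$ agrees with the $i$th coordinate whenever $|z|<1$), one computes
\begin{align*}
L\phi_n(x)
&=\int_{\{|z|<1\}}\bigl(z^{(i)}-z^{(i)}\bigr)\,N(x,\d z)
+\int_{\{|z|\geq 1\}}\bigl(\phi_n(x+z)-\phi_n(x)\bigr)\,N(x,\d z).
\end{align*}
The first integral vanishes; for the second, as $n\to\infty$ the integrand converges to $z^{(i)}$ and is dominated by $C(1+|z|)\mathbf 1_{\{|z|\geq 1\}}$, which is $N(x,\cdot)$-integrable by \eqref{eq:first-moment}. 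Hence $L\phi_n(x)\to\int_{\{|z|\geq1\}}z^{(i)}\,N(x,\d z)=0$ by the zero-drift hypothesis \eqref{eq:zero-drift}, locally uniformly in $x$.

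The core analytic step is then to pass to the limit in the martingale relation. Introduce the exit times $\tau_m=\inf\{t\geq 0:|X_t|\geq m\}$. For $n>m$, on the stochastic interval $[0,\tau_m]$ we have $\phi_n(X_{t\wedge\tau_m})=X_{t\wedge\tau_m}^{(i)}$ and $L\phi_n(X_s)$ equals a quantity that, as $n\to\infty$, tends to $0$ with a bound independent of $n$ and $x$ thanks to \eqref{eq:first-moment}; by dominated convergence the optional-stopping version $M_{t\wedge\tau_m}^{[\phi_n]}$ converges in $L^1$ (and a.s.) to $X_{t\wedge\tau_m}^{(i)}-X_0^{(i)}$, which is therefore an $(\F_t)$-martingale. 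Next I would establish the uniform second-moment bound by applying \eqref{eq:martingale-u} to a function $\psi_n\in C_b^2$ agreeing with $(x^{(i)})^2$ on $\{|x|\leq n\}$: a computation gives, on the relevant region, $L\psi_n(x)=\int_{\R^d\setminus\{0\}}(z^{(i)})^2\,N(x,\d z)$ plus terms that vanish in the limit, and this is bounded by the constant in \eqref{eq:second-moment}. Taking expectations in the stopped martingale relation for $\psi_n$ yields $\Ee_x\bigl[(X_{t\wedge\tau_m}^{(i)})^2\bigr]\leq (x^{(i)})^2 + C t$ uniformly in $m,n$; Fatou then gives $\Ee_x[(X_t^{(i)})^2]<\infty$, so $\{\tau_m\}$ localizes a true (UI on compact time intervals) martingale, $\tau_m\uparrow\infty$ $\Pp_x$-a.s.\ (using conservativeness), and $X_t^{(i)}-X_0^{(i)}$ is a genuine $L^2$-martingale.

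For the bracket formula \eqref{eq:cov}, apply \eqref{eq:martingale-u} to functions equal to $x^{(i)}x^{(j)}$ on large balls; the same truncation argument produces the Dynkin-type identity showing that
\[
X_t^{(i)}X_t^{(j)}-X_0^{(i)}X_0^{(j)}-\int_0^t\!\!\int_{\R^d\setminus\{0\}}z^{(i)}z^{(j)}\,N(X_s,\d z)\,\d s
\]
is a martingale, while on the other hand $X_t^{(i)}X_t^{(j)}-\langle X^{(i)},X^{(j)}\rangle_t$ minus a martingale (Itô product of the two coordinate martingales) is a martingale; uniqueness of the Doob--Meyer decomposition of the special semimartingale $X^{(i)}X^{(j)}$ identifies the predictable finite-variation parts, giving \eqref{eq:cov}. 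Finally, purely discontinuous character: $X$ is a semimartingale whose generator \eqref{eq:generator} has no second-order (diffusion) term, so by the Lévy--Itô-type decomposition for such Feller processes (\cite[Theorem 2.44]{BSW}) the continuous martingale part of each $X^{(i)}$ vanishes, whence each coordinate martingale is purely discontinuous in the sense of \cite[Chapter I]{JS03}.

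The main obstacle is the interchange of limits when removing the truncation $\phi_n\to(\cdot)^{(i)}$ simultaneously with the localization $\tau_m\uparrow\infty$: one must secure a domination that is uniform in both $n$ and $m$, and the only tool available for this is the uniform second-moment bound \eqref{eq:second-moment} (together with its consequence \eqref{eq:first-moment}); making this quantitative enough to justify all the dominated-convergence passages — in particular the $L^1$-convergence of the stopped martingales and the a.s.\ finiteness of $\sup_n L\phi_n$ along paths — is where the real work lies.
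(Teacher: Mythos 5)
Your proposal is correct and follows essentially the paper's own ``generator approach'': apply the Dynkin/martingale identity \eqref{eq:martingale-u} to compactly supported truncations of $x^{(i)}$, $(x^{(i)})^2$ and $x^{(i)}x^{(j)}$, localize with exit times, and remove the truncation and the localization via dominated/monotone convergence powered by \eqref{eq:second-moment}; your treatment of the purely discontinuous part through the vanishing diffusion term in \cite[Theorem 2.44]{BSW} coincides with the paper's second (canonical representation) proof. The only notable divergence is that by invoking \eqref{eq:zero-drift} from the outset you make $L$ applied to the quadratic truncation bounded by a constant and thereby avoid the Gronwall step the paper needs (its Proposition \ref{lem:moment-formula} is proved without the zero-drift hypothesis), at the mild cost that your domination in $n$ is only uniform for each fixed $m$ (the cutoff error terms carry a factor of order $(x^{(i)})^2\le m^2$), so the limits must be taken in the order $n\to\infty$ then $m\to\infty$ --- which your Fatou step does accommodate.
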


In the following, we will show
two different approaches to prove Theorem \ref{thm:markov-mtg}.
The first one relies on the expression of the generator $L$ via moment calculus,
and the second one is based on the canonical representation of the semimartingale.

\subsubsection{Generator approach}\label{section2.1.1}

The key  ingredient of the generator approach to establish Theorem \ref{thm:markov-mtg}
is the following statement for
the first and second moments of $X_t$.
\begin{prop}\label{lem:moment-formula}
Let Assumption {\rm \ref{assum:generator}} hold. Then, for  any $t>0$ and $i=1,\dots, d$,
$X_t^{(i)}$ has finite second moment, and, for any $x_0\in \R^d$,
\begin{equation}\label{eq:n-moment}
\begin{split}
\Ee_{x_0}\left[(X_t^{(i)}-x_0^{(i)})^2\right]
&=\Ee_{x_0}\left[\int_0^t\left(\int_{\R^d\setminus\{0\}}(z^{(i)})^2\,N(X_s,\d z)\right)\,\d s \right]\\
&\quad+2\Ee_{x_0}\left[\int_0^t(X_s^{(i)}-x_0^{(i)})\left(\int_{\{|z|\geq 1\}}z^{(i)}\,N(X_s,\d z)\right)\d s\right]
<+\infty.
\end{split}
\end{equation} Moreover, it also holds that
\begin{equation}\label{eq:n-moment-1}
\Ee_{x_0}\left[X_t^{(i)}-x_0^{(i)}\right]
=\Ee_{x_0}\left[\int_0^t\left(\int_{\{|z|\geq 1\}}z^{(i)}N(X_s,\d z)\right)\,d s\right].
\end{equation}
\end{prop}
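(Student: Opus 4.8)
The plan is to read off all three assertions from the martingale identity \eqref{eq:martingale-u}, which by the remarks following Assumption \ref{assum:generator} holds for every $u\in C_b^2(\R^d)$, by applying it to suitable $C_b^2$-truncations of the inadmissible test functions $x\mapsto x^{(i)}-x_0^{(i)}$ and $x\mapsto (x^{(i)}-x_0^{(i)})^2$. Fix $i$, $x_0$ and write $r(x):=x^{(i)}-x_0^{(i)}$. First I would pick (by standard constructions) $\sigma_n,\psi_n\in C^2(\R)$ with: $\sigma_n$ odd and bounded, $\sigma_n(\rho)=\rho$ for $|\rho|\le n$, $|\sigma_n(\rho)|\le|\rho|$, $\|\sigma_n'\|_\infty\le 1$, $\|\sigma_n''\|_\infty\le C/n$; and $\psi_n$ even and bounded, nondecreasing on $[0,\infty)$, $0\le\psi_n\le\psi_{n+1}$, $\psi_n(\rho)\uparrow\rho^2$, $\psi_n(\rho)=\rho^2$ for $|\rho|\le n$, $0\le\psi_n(\rho)\le\rho^2$, $|\psi_n'(\rho)|\le 2\min(|\rho|,n)$, and $\sup_n\|\psi_n''\|_\infty<\infty$ (this last requirement is why the smoothing has to occur over a region of width of order $n$). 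Then $u_n:=\sigma_n\circ r$ and $v_n:=\psi_n\circ r$ lie in $C_b^2(\R^d)$, and since $Lu_n$ and $Lv_n$ are bounded functions (by \eqref{eq:generator}, \eqref{eq:second-moment} and the above bounds, so that all terms below are integrable), taking $\Ee_{x_0}$-expectations in \eqref{eq:martingale-u} at time $t$ and using $u_n(x_0)=v_n(x_0)=0$ gives, for every $n$,
\begin{align*}
\Ee_{x_0}\big[u_n(X_t)\big]=\Ee_{x_0}\!\left[\int_0^t Lu_n(X_s)\,\d s\right],\qquad
\Ee_{x_0}\big[v_n(X_t)\big]=\Ee_{x_0}\!\left[\int_0^t Lv_n(X_s)\,\d s\right].
\end{align*}

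\emph{Step 1: finiteness of the second moment.} Set $K:=\sup_{x}\int_{\R^d\setminus\{0\}}|z|^2\,N(x,\d z)<\infty$. Using \eqref{eq:generator} for $Lv_n$, a second-order Taylor estimate on $\{|z|<1\}$ (bounded by $\tfrac12\|\psi_n''\|_\infty|z|^2$) and the mean value theorem on $\{|z|\ge1\}$ together with $0\le\psi_n(\rho)\le\rho^2$, $|\psi_n'(\rho)|\le 2\min(|\rho|,n)$ and the elementary inequalities $2|r||z|\le r^2+|z|^2$, $2n|z|\le n^2+|z|^2$ and $n^2\le\psi_n(r)$ for $|r|\ge n$, I will obtain a pointwise bound
\[
|Lv_n(x)|\le c_2\,\psi_n(r(x))+c_1,\qquad c_1=c_1(K),\ c_2=c_2(K),
\]
with $c_1,c_2$ independent of $n$. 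Substituting $X_s$, taking expectations and using Tonelli, the bounded measurable function $g_n(s):=\Ee_{x_0}[v_n(X_s)]=\Ee_{x_0}[\psi_n(r(X_s))]$ satisfies $g_n(t)\le c_1 t+c_2\int_0^t g_n(s)\,\d s$, so Gronwall's inequality yields $g_n(t)\le c_1 t\,e^{c_2 t}$, uniformly in $n$. Since $\psi_n(r(X_t))\uparrow r(X_t)^2$, monotone convergence gives $\Ee_{x_0}[(X_t^{(i)}-x_0^{(i)})^2]\le c_1 t\,e^{c_2 t}<\infty$; the same estimate with $t$ replaced by any $s\le t$ gives $\sup_{s\le t}\Ee_{x_0}[(X_s^{(i)}-x_0^{(i)})^2]<\infty$, which is needed below.

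\emph{Step 2: the two moment formulas.} For fixed $x$, let $n\to\infty$ in \eqref{eq:generator}. The integrands converge pointwise in $z$ (once $n>|r(x)|+|z|$ the truncations act as the identity, resp.\ the square, near the relevant points) and are dominated, for $Lu_n$, by $C|z|^2\mathbf 1_{\{|z|<1\}}+C|z|\,\mathbf 1_{\{|z|\ge1\}}$ and, for $Lv_n$, by $C|z|^2\mathbf 1_{\{|z|<1\}}+(Cr(x)^2+C|z|^2)\mathbf 1_{\{|z|\ge1\}}$, both $N(x,\cdot)$-integrable by \eqref{eq:second-moment}--\eqref{eq:first-moment}; hence dominated convergence gives
\[
Lu_n(x)\to \int_{\{|z|\ge1\}}z^{(i)}\,N(x,\d z),\qquad
Lv_n(x)\to \int_{\R^d\setminus\{0\}}(z^{(i)})^2\,N(x,\d z)+2r(x)\!\int_{\{|z|\ge1\}}z^{(i)}\,N(x,\d z),
\]
together with $|Lu_n(x)|\le CK$ and $|Lv_n(x)|\le CK(1+r(x)^2)$ uniformly in $n$. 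Passing to the limit in the two identities of the first paragraph is now justified by dominated convergence: on the right-hand sides the dominating functions $CK$ and $CK(1+r(X_s)^2)$ are $\Pp_{x_0}\otimes\d s$-integrable on $\Omega\times[0,t]$ (the latter by Step 1); on the left-hand sides $u_n(X_t)\to X_t^{(i)}-x_0^{(i)}$ with $|u_n(X_t)|\le|X_t^{(i)}-x_0^{(i)}|\in L^1(\Pp_{x_0})$ by Step 1, while $v_n(X_t)\uparrow(X_t^{(i)}-x_0^{(i)})^2$. Identifying the limits yields exactly \eqref{eq:n-moment-1} and \eqref{eq:n-moment}, the finiteness claim in \eqref{eq:n-moment} being Step 1.

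The only genuine difficulty is the apparent circularity in Step 1: the identity for $v_n$ becomes useful only after $Lv_n$ has been dominated by $v_n$ itself with constants that do not depend on $n$, and achieving this forces the careful choice of the truncations $\psi_n$ — in particular $\sup_n\|\psi_n''\|_\infty<\infty$, hence a smoothing scale growing like $n$ — so that the second-moment hypothesis \eqref{eq:second-moment} is precisely what powers the Gronwall argument. Everything after the resulting $L^2$-bound is routine dominated/monotone convergence.
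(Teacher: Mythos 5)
Your proof is correct, and it reaches the same structural pivot as the paper's argument --- a uniform-in-truncation bound of the form $|L(\text{truncation})|\le C(1+\text{truncation})$, powered by \eqref{eq:second-moment}, which closes a Gronwall loop --- but it gets there by a genuinely different localization. The paper truncates in space: it multiplies $f(x)=(x^{(i)}-x_0^{(i)})^2$ by compactly supported cutoffs $\phi_l$ (so as to land in $C_c^2(\R^d)\subset D(L)$), proves the bound $|L(f\phi_l)|\le C(1+f)$ in a separate lemma, and then must additionally localize in time with the exit times $\tau_m=\inf\{t:|X_t-x_0|\ge m\}$ and the truncation $m^2\wedge f$ before Gronwall applies, removing $\tau_m$ at the end via conservativeness. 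You instead truncate the range, composing the coordinate with bounded $C^2$ functions $\sigma_n,\psi_n$ calibrated so that $|Lv_n|\le c_1+c_2\psi_n(r)$ with $\psi_n(r)$ itself (not $r^2$) on the right; since $\Ee_{x_0}[\psi_n(r(X_s))]$ is then a priori finite and bounded, Gronwall applies directly and the stopping-time layer disappears entirely, at the modest price of the careful calibration $\sup_n\|\psi_n''\|_\infty<\infty$, $|\psi_n'|\le 2\min(|\cdot|,n)$, $n^2\le\psi_n$ off $[-n,n]$. Both approaches then finish by the same monotone/dominated convergence. Two small remarks: exact monotonicity $\psi_n\uparrow\rho^2$ takes a little care to arrange in the construction, but it is not essential --- Fatou gives the second-moment bound and $0\le\psi_n\le\rho^2$ then gives domination for the limit passage; and your use of the martingale identity for $C_b^2$ test functions is justified by the paper's remark at the end of Subsection \ref{subsect:prelim} that \eqref{eq:martingale-u} extends to $u\in C_b^2(\R^d)$ under Assumption \ref{assum:generator}.
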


We postpone the proof of Proposition \ref{lem:moment-formula}
to the end of this part.
Using this proposition, we can present the

\begin{proof}[Proof of Theorem {\rm \ref{thm:markov-mtg}}]
Let Assumption \ref{assum:generator} and \eqref{eq:zero-drift} hold.
We first show that $X$ is a
martingale
having finite second moment and satisfying
\eqref{eq:cov}.
That $X$ has finite second moment has been claimed in Proposition \ref{lem:moment-formula}.
According to \eqref{eq:n-moment-1} and \eqref{eq:zero-drift},
\begin{equation*}
\Ee_{x_0}[X_t^{(i)}-x_0^{(i)}]
=\Ee_{x_0}\left[\int_0^t\left(\int_{\{|z|\geq 1\}}z^{(i)}N(X_s,\d z)\right)\,d s\right]
=0,\quad t>0,
1\le i\le d.
\end{equation*}
Then, by the Markov property,
for any $0<s\le t$,
\begin{equation*}
\Ee_{x_0}[X_t^{(i)} \mid {\cal F}_s]
=\Ee_{X_s}[X_{t-s}^{(i)}]=X_s^{(i)},
\end{equation*}
whence $(X_t^{(i)})_{t\geq 0}$ is a martingale
for all $1\le i\le d. $

Since
\begin{equation*}
\Ee_{x_0}[(X_t^{(i)}-x_0^{(i)})^2]
=\Ee_{x_0}[(X_t^{(i)})^2]-(x_0^{(i)})^2,
\end{equation*}
we see by
\eqref{eq:n-moment}, \eqref{eq:zero-drift} and
the Markov property that
\begin{equation*}
L_t:=(X_t^{(i)})^2-\int_0^t\int_{\R^d\setminus\{0\}}(z^{(i)})^2\,N(X_s,\d z)\,\d s, \quad  t\geq 0
\end{equation*}
is a martingale.
This implies that
\begin{equation*}
\langle X^{(i)} \rangle_t=\int_0^t\int_{\R^d\setminus\{0\}}(z^{(i)})^2\,N(X_s,\d z)\,\d s
\end{equation*}
and thus we obtain \eqref{eq:cov}.

We next show that the martingale $X$ is purely discontinuous.
Let $X^{(i),c}$ and $X^{(i),d}$ be the continuous and purely discontinuous parts of $X^{(i)}$,
respectively (see \cite[Theorem I.4.18]{JS03}).
Let
$\Delta X_s^{(i)}=X_s^{(i)}-X_{s-}^{(i)}$ for all $s>0$.
Denote by $([X^{(i)}]_t)_{t\ge0}$ the quadratic variation of $X^{(i)}$.
Then, by \cite[Theorem I.4.52]{JS03},
\begin{equation}\label{eq:quad-var}
[X^{(i)}]_t=\langle X^{(i),c} \rangle_t+\sum_{s\in (0,t]: \Delta X_s\ne 0}(\Delta X_s^{(i)})^2.
\end{equation}
Since $\Ee_{x_0}\left[[X^{(i)}]_t\right]=\Ee_{x_0}\left[\langle X^{(i)} \rangle_t\right]$
and the L\'evy system formula (see, e.g., \cite[Remark 2.46]{BSW} and references therein) implies that
\begin{equation*}
\Ee_{x_0}\left[\langle X^{(i)} \rangle_t\right]
=\Ee_{x_0}\left[\int_0^t\left(\int_{\R^d\setminus\{0\}}(z^{(i)})^2\,N(X_s,\d z)\right)\,\d s\right]
=\Ee_{x_0}\left[\sum_{s\in (0,t]: \Delta X_s\ne 0}(\Delta X_s^{(i)})^2\right],
\end{equation*}
we see
from \eqref{eq:quad-var} that
$\Ee_{x_0}\left[\langle X^{(i),c} \rangle_t\right]=0$ for any $t>0$.
Therefore, $\Pp_{x_0}(\langle X^{(i),c} \rangle_t=0)=1$ for any $t>0$.
Noticing that $(\langle X^{(i),c} \rangle_t)_{t\ge 0}$ is right continuous,
we further obtain
\begin{equation*}
\Pp_{x_0}\left(\text{$\langle X^{(i),c} \rangle_t=0$ for any $t>0$}\right)=1,
\end{equation*}
and so, by \cite[Lemma I.4.13 a)]{JS03},
\begin{equation*}
\Pp_{x_0}\left(\text{$X_t^{(i),c}=0$ for any $t>0$}\right)=1.
\end{equation*}
Hence the martingale $X$ is purely discontinuous.
\end{proof}

Next, we are  back to the proof of Proposition \ref{lem:moment-formula}.
Let $X$ be a Feller process on $\R^d$ such that
the associated Feller generator $(L,D(L))$ fulfills Assumption {\rm \ref{assum:generator}}.
For any differentiable function $u$ on $\R^d$ such that
\begin{equation*}
\int_{\{0<|z|<1\}}|u(x+z)-u(x)-\langle\nabla u(x),z\rangle|\,N(x,\d z)
+\int_{\{|z|\geq 1\}}|u(x+z)-u(x)|\,N(x,\d z)<\infty
\end{equation*}
for any $x\in \R^d$, we will
define $Lu$ by \eqref{eq:generator}
in the pointwise sense.
This definition is clearly consistent with $(L,C_b^2(\R^d))$.
Moreover,
under \eqref{eq:second-moment},
$Lu$ can be well
defined by \eqref{eq:generator}
even
for unbounded twice continuously differentiable functions $u$;
for example, $u(x)=x^{(i)}-x^{(i)}_0$ and $u(x)=(x^{(i)}-x_0^{(i)})^2$ for any $x_0\in \R^d$.

To show Proposition \ref{lem:moment-formula}, we
start with the following simple lemma.
For any $x_0\in \R^d$ and any $l\in {\mathbb N}$,
let $\phi_l(x)\in C_c^\infty(\R^d)$
satisfy
\begin{equation}\label{e:function-1}
\phi_l(x)
\begin{cases}
=1,&\quad 0\leq |x-x_0|\leq l,\\
\in [0,1],&\quad l<|x-x_0|<l+1,\\
=0, &\quad |x-x_0|\geq l+1.
\end{cases}
\end{equation}

\begin{lem}\label{lem:lim-generator}
Under Assumption {\rm \ref{assum:generator}},
the following two statements hold.
\begin{itemize}
\item[{\rm(1)}]
The function $f(x)=(x^{(i)}-x_0^{(i)})^2$ for any fixed $x_0\in \R^d$ satisfies
\begin{equation}\label{eq:lim-general}
\lim_{l\rightarrow\infty}L(f\phi_l)(x)=Lf(x),\quad x\in \R^d,
\end{equation}
where $\phi_l$ is defined by \eqref{e:function-1}.
Moreover, there is a constant $C_1>0$ such that for all $l\in {\mathbb N}$ and $x_0,x\in \R^d$,
\begin{equation}\label{eq:lim-general-1}
|L(f\phi_l)(x)|\le   C_1(1+f(x)),
\end{equation}
and so $$|Lf(x)|\le C_1(1+f(x)).$$

\item[{\rm(2)}]
The function $f(x)=x^{(i)}-x_0^{(i)}$ for any fixed $x_0\in \R^d$ also satisfies \eqref{eq:lim-general},
and  there is a constant $C_2>0$ such that for all $x_0, x\in \R^d$,
$$|L(f\phi_l)(x)|\le   C_2(1+|f(x)|)$$
and
$$|Lf(x)|\le C_2(1+|f(x)|).$$
\end{itemize}
\end{lem}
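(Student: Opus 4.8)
The plan is to work throughout with the pointwise integral formula \eqref{eq:generator}: since $f\phi_l\in C_c^\infty(\R^d)$, $L(f\phi_l)$ is given by \eqref{eq:generator}, while, as recalled above, \eqref{eq:generator} also defines $Lf$ for the unbounded quadratic and affine functions $f$ occurring here because of \eqref{eq:second-moment}. From the start I would fix the cut-offs $\phi_l$ in \eqref{e:function-1} so that $c_0:=\sup_l\big(\|\nabla\phi_l\|_\infty+\|D^2\phi_l\|_\infty\big)<\infty$; this is possible since the transition layer $\{l<|x-x_0|<l+1\}$ has unit width for every $l$ (for instance $\phi_l(x)=h(|x-x_0|-l)$ with a fixed $h\in C^\infty(\R)$ equal to $1$ on $(-\infty,0]$ and $0$ on $[1,\infty)$, using that $|x-x_0|$ is smooth on the support of $\nabla\phi_l$, where $|x-x_0|\ge l\ge 1$).

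I would prove the convergence \eqref{eq:lim-general} first. Fix $x$ and take $l\ge|x-x_0|+1$, so that $\phi_l\equiv1$ on the unit ball around $x$; then $f\phi_l$ agrees with $f$ there, together with its gradient at $x$, and therefore
\[
L(f\phi_l)(x)-Lf(x)=\int_{\{|z|\ge1\}}f(x+z)\big(\phi_l(x+z)-1\big)\,N(x,\d z)
\]
for all such $l$. Since $\phi_l(x+z)\to1$ for every fixed $z$, and $|f(x+z)\,(\phi_l(x+z)-1)|\le f(x+z)\le 2f(x)+2|z|^2$ in case (1) (resp.\ $\le|f(x)|+|z|$ in case (2)), a function that is $N(x,\d z)$-integrable on $\{|z|\ge1\}$ by \eqref{eq:second-moment} (resp.\ \eqref{eq:first-moment}) and independent of $l$, dominated convergence gives \eqref{eq:lim-general}.

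For the uniform bound \eqref{eq:lim-general-1} I would split $L(f\phi_l)(x)$ at $|z|=1$. On $\{|z|\ge1\}$, using $0\le\phi_l\le1$ and $f(x+z)=(x^{(i)}+z^{(i)}-x_0^{(i)})^2\le 2f(x)+2|z|^2$, one gets $|(f\phi_l)(x+z)-(f\phi_l)(x)|\le 3f(x)+2|z|^2$, and integrating against $N(x,\d z)$ and using \eqref{eq:second-moment} together with $N(x,\{|z|\ge1\})\le\int_{\{|z|\ge1\}}|z|^2\,N(x,\d z)$ bounds this part by $C(1+f(x))$. On $\{|z|<1\}$ I would use the Taylor estimate
\[
\big|(f\phi_l)(x+z)-(f\phi_l)(x)-\langle\nabla(f\phi_l)(x),z\rangle\big|\le\tfrac12\Big(\sup_{|y-x|\le1}\|D^2(f\phi_l)(y)\|\Big)|z|^2 .
\]
The one point that needs genuine care --- and, I expect, the main obstacle --- is that $\sup_{|y-x|\le1}\|D^2(f\phi_l)(y)\|\le C(1+f(x))$ with $C$ depending only on $c_0$, \emph{even though} $\|D^2(f\phi_l)\|_\infty$ over all of $\R^d$ is of order $l^2$: writing $D^2(f\phi_l)=\phi_l\,D^2f+\nabla f\otimes\nabla\phi_l+\nabla\phi_l\otimes\nabla f+f\,D^2\phi_l$ and using that for $|y-x|\le1$ one has $|y^{(i)}-x_0^{(i)}|\le\sqrt{f(x)}+1$, each of $\|D^2f\|=2$, $|\nabla f(y)|=2|y^{(i)}-x_0^{(i)}|$ and $f(y)=(y^{(i)}-x_0^{(i)})^2$ is $\le C(1+f(x))$ --- that is, $f$ varies slowly on unit scales. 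Integrating the Taylor estimate against $N(x,\d z)$ over $\{|z|<1\}$ and invoking \eqref{eq:second-moment} bounds this part by $C(1+f(x))$ as well; adding the two parts proves \eqref{eq:lim-general-1}, and letting $l\to\infty$ together with \eqref{eq:lim-general} gives $|Lf(x)|\le C_1(1+f(x))$.

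For part (2) the same scheme works and is shorter. Since $f$ is affine, $f(x+z)-f(x)-\langle\nabla f(x),z\rangle\equiv0$, so $Lf(x)=\int_{\{|z|\ge1\}}z^{(i)}\,N(x,\d z)$ and \eqref{eq:lim-general} follows exactly as above. Splitting $L(f\phi_l)(x)$ at $|z|=1$ again, the $\{|z|\ge1\}$ piece is estimated as in case (1) with $|z|^2$ replaced by $|z|$ (using \eqref{eq:first-moment}), and for the $\{|z|<1\}$ piece the Taylor estimate applies with $\|D^2(f\phi_l)(y)\|\le c_0\,(3+|f(x)|)$ for $|y-x|\le1$ (now $D^2f=0$). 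This yields $|L(f\phi_l)(x)|\le C_2(1+|f(x)|)$, and letting $l\to\infty$, $|Lf(x)|\le C_2(1+|f(x)|)$.
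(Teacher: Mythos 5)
Your proof is correct, and the part that carries the real weight --- the uniform bound \eqref{eq:lim-general-1} --- is in substance the same as the paper's: you control the derivatives of $f\phi_l$ on the unit ball around $x$ via the Leibniz rule and the fact that $\|\nabla\phi_l\|_\infty$, $\|D^2\phi_l\|_\infty$ can be taken bounded uniformly in $l$ (the paper uses exactly this, phrased as Taylor estimates $|\phi_l(x+z)-\phi_l(x)|\le C_*|z|$ and $|\phi_l(x+z)-\phi_l(x)-\langle\nabla\phi_l(x),z\rangle|\le C_{**}|z|^2$ applied inside the three-term product decomposition of the second-order difference of $f\phi_l$; you organize the same information as a localized Hessian bound $\sup_{|y-x|\le 1}\|D^2(f\phi_l)(y)\|\le C(1+f(x))$ and then apply Taylor once). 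You are right that this uniformity in $l$ is the one point needing care, and making the choice of $\phi_l$ explicit at the outset is a small improvement on the paper, which uses it tacitly. Where you genuinely depart from the paper is the limit \eqref{eq:lim-general}: the paper runs dominated convergence on both the small-jump and large-jump integrals, with the dominating functions supplied by the uniform bound; you instead observe that for $l\ge|x-x_0|+1$ the function $f\phi_l$ coincides with $f$ (to second order at $x$ and exactly on $B(x,1)$), so the difference $L(f\phi_l)(x)-Lf(x)$ collapses to the single tail integral $\int_{\{|z|\ge 1\}}f(x+z)(\phi_l(x+z)-1)\,N(x,\d z)$, to which one application of dominated convergence applies. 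This is cleaner and makes the convergence statement independent of the quantitative bound; the paper's route has the minor advantage of reusing the same dominating estimates for both conclusions. Both arguments are valid.
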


\begin{proof}
We only prove (1), since (2) can be verified similarly.
Since $f\phi_l\in C_c^2(\R^d)\subset D(L)$,
\begin{equation}\label{eq:separate}
\begin{split}
L(f\phi_l)(x)
&=\int_{\{0<|z|<1\}}(f\phi_l(x+z)-f\phi_l(x)-\langle \nabla(f\phi_l)(x),z\rangle)\,N(x,\d z)\\
&\quad +\int_{\{|z|\geq 1\}}(f\phi_l(x+z)-f\phi_l(x))\,N(x,\d z)\\
&=:{\rm (I)}+{\rm (II)}.
\end{split}
\end{equation}
For any $x,z\in \R^d$ with $0<|z|<1$,
\begin{equation*}
\begin{split}
&f\phi_l(x+z)-f\phi_l(x)-\langle \nabla(f\phi_l)(x),z\rangle\\
&=f(x)(\phi_l(x+z)-\phi_l(x)-\langle \nabla\phi_l(x),z\rangle)+\phi_l(x+z)(f(x+z)-f(x)-\langle \nabla f(x), z\rangle)\\
&\quad
+(\phi_l(x+z)-\phi_l(x))\langle \nabla f(x), z\rangle.
\end{split}
\end{equation*}
Note that, by the Taylor theorem,
there exist positive constants $C_*$ and $C_{**}$ such that
for any $l\in {\mathbb N}$ and $x,z\in \R^d$,
$$|\phi_l(x+z)-\phi_l(x)|\leq C_*|z|,\quad  |\phi_l(x+z)-\phi_l(x)-\langle \nabla \phi_l(x),z\rangle|\leq C_{**}|z|^2.$$
This yields that
there exists $c_1>0$ such that for any $l\in {\mathbb N}$
and $x, z\in \R^d$,
\begin{equation*}
|f\phi_l(x+z)-f\phi_l(x)-\langle \nabla(f\phi_l)(x),z\rangle|
\leq c_1(1+f(x))(z^{(i)})^2.
\end{equation*}
Hence, by \eqref{eq:second-moment},
\begin{equation}\label{eq:i-bound}
\begin{split}
|{\rm (I)}|\leq
c_1(1+f(x))\int_{\{0<|z|<1\}}(z^{(i)})^2\,N(x,\d z)
\leq c_2(1+f(x)).
\end{split}
\end{equation}
Because
\begin{equation*}
f\phi_l(x+z)-f\phi_l(x)-\langle \nabla(f\phi_l)(x),z\rangle
\rightarrow f(x+z)-f(x)-\langle \nabla f(x),z\rangle\quad \hbox{ as } l\rightarrow\infty,
\end{equation*}
we get, by the dominated convergence theorem,
that
\begin{equation}\label{eq:i-conv}
{\rm (I)}\rightarrow
\int_{\{0<|z|<1\}}(f(x+z)-f(x)-\langle \nabla f(x),z\rangle)\,N(x,\d z)\quad \hbox{ as } l\rightarrow\infty.
\end{equation}

Since there exists $c_3>0$ such that for any $l\in {\mathbb N}$ and $x,z\in \R^d$,
\begin{equation*}
|f\phi_l(x+z)-f\phi_l(x)|
\leq f(x+z)+f(x)
\leq c_3(f(x)+(z^{(i)})^2),
\end{equation*}
 it also follows from \eqref{eq:second-moment} that
  for any $x\in \R^d$,
\begin{equation}\label{eq:i-bound-1}
 |{\rm (II)}|\leq c_3\left(f(x)\int_{\{|z|\geq 1\}}\,N(x,\d z)
+\int_{\{|z|\geq 1\}}(z^{(i)})^2\,N(x,\d z)\right)
\leq c_4(1+f(x)).
\end{equation}
With this at hand, we get by the dominated convergence theorem again that
\begin{equation*}
{\rm (II)}\rightarrow \int_{\{|z|\geq 1\}}(f(x+z)-f(x))\,N(x,\d z)\quad \hbox{ as } l\rightarrow\infty.
\end{equation*}
Combining this with \eqref{eq:i-conv}, we complete the proof of \eqref{eq:lim-general}.
Furthermore, \eqref{eq:lim-general-1} follows from \eqref{eq:i-bound} and \eqref{eq:i-bound-1}.
\end{proof}

Using Lemma \ref{lem:lim-generator},
we can now
present the

\begin{proof}[Proof of Proposition {\rm \ref{lem:moment-formula}}]
Throughout the proof, we fix $x_0\in \R^d$.
We first show that $X_t^{(i)}$ has finite second moment and \eqref{eq:n-moment} holds.
Let $f(x)=(x^{(i)}-x_0^{(i)})^2$, and $\phi_l$ be the function defined by \eqref{e:function-1}.
Then, $f\phi_l\in C_c^2(\R^d)\subset D(L)$, and so
\begin{equation*}
M_t^{[f\phi_l]}=f\phi_l(X_t)-f\phi_l(X_0)-\int_0^tL(f\phi_l)(X_s)\,\d s, \quad t\geq 0
\end{equation*}
is a martingale as mentioned in \eqref{eq:martingale-u}
and remarks before Subsection \ref{Section2.2}.
Hence
\begin{equation}\label{eq:mtg-exp}
\Ee_{x_0}[f\phi_l(X_t)]=f\phi_l(x_0)+\Ee_{x_0}\left[\int_0^tL(f\phi_l)(X_s)\,\d s\right]
=\Ee_{x_0}\left[\int_0^tL(f\phi_l)(X_s)\,\d s\right].
\end{equation}

For $m\in {\mathbb N}$, let $\tau_m:=\inf\{t>0: |X_t-x_0|\geq m\}$.
Then, by the optional stopping theorem, for all $l,m\in {\mathbb N}$ and $t>0$,
\begin{equation}\label{eq:opt-1}
\Ee_{x_0}[f\phi_l(X_{t\wedge \tau_m})]
=\Ee_{x_0}\left[\int_0^{t\wedge \tau_m}L(f\phi_l)(X_s)\,\d s\right].
\end{equation}

According to \eqref{eq:lim-general-1},
there is $c_1>0$ such that for all $l,m\in {\mathbb N}$ and $t>0$,
\begin{equation}\label{eq:bound}
\begin{split}
\Ee_{x_0}\left[\int_0^{t\wedge \tau_m}|L(f\phi_l)(X_s)|
\,\d s\right]
\le & c_1\Ee_{x_0}\left[\int_0^{t\wedge \tau_m}[1+f(X_s)]\,\d s\right]\\
=& c_1\Ee_{x_0}\left[\int_0^{t\wedge \tau_m}[1+f(X_{s-})]\,\d s\right]\\
=&c_1\Ee_{x_0}\left[\int_0^{t\wedge \tau_m}[1+(m^2\wedge f)(X_{s-})]\,\d s\right]\\
=& c_1\Ee_{x_0}\left[\int_0^{t\wedge \tau_m}[1+(m^2\wedge f)(X_{s})]\,\d s\right]\\
\le &c_1\Ee_{x_0}\left[\int_0^{t}[1+(m^2\wedge f)(X_{s\wedge \tau_m})]\,\d s\right].
\end{split}
\end{equation}
Here, $X_{t-}:=\lim_{s\rightarrow t-0}X_s$ for any $t>0$,
in the first and the third equalities above
we used the fact that almost surely $t\mapsto X_t$ is a c\`{a}dl\`{a}g function which has
at most countably many jumps on $[0,t]$, i.e., $\{s\in[0,t]:X_s\neq X_{s-}\}$  is almost surely
a set of Lebesgue measure zero;
in the second equality we used the fact that $f(X_{s-})\le m^2$ for all $s\in [0, \tau_m]$;
and the last inequality follows from
the monotonicity of the Lebesgue integral.

On the other hand, the monotone convergence theorem yields that for all $m\in {\mathbb N}$ and $t>0,$
$$\lim_{l\to\infty}\Ee_{x_0}[f\phi_l(X_{t\wedge \tau_m})]\ge \lim_{l\to\infty}\Ee_{x_0}[(m^2\wedge f)\phi_l(X_{t\wedge \tau_m})]= \Ee_{x_0} [(m^2\wedge f)(X_{t\wedge \tau_m})].$$
Combining this and \eqref{eq:bound} with \eqref{eq:opt-1},
we have for all $m\in{\mathbb N}$ and $t>0$,
$$\Ee_{x_0} [1+(m^2\wedge f)(X_{t\wedge \tau_m})]\le 1+ c_1\int_0^{t}\Ee_{x_0}[1+(m^2\wedge f)(X_{s\wedge \tau_m})]\,\d s.$$
Then, by the Gronwall inequality,
$$ \Ee_{x_0} [1+(m^2\wedge f)(X_{t\wedge \tau_m})]\le e^{c_1t}.$$
Since the process $X$ is conservative, letting $m\to\infty$,
we have
for all $t>0$,
\begin{equation}\label{e:bound-f}
\Ee_{x_0} [f(X_{t})]\le e^{c_1t}-1.
\end{equation}
Therefore, $X_t^{(i)}$ has finite second moment.

Furthermore, by \eqref{e:bound-f}, Lemma \ref{lem:lim-generator}
and the dominated convergence theorem,
letting $l\to\infty$ in \eqref{eq:mtg-exp}, we obtain that for all $t>0$,
$$\Ee_{x_0}[f(X_{t})]=\Ee_{x_0}\left[\int_0^{t}Lf(X_s)\,\d s\right].$$
Note that, for any $x\in \R^d$,
\begin{align*}
Lf(x)=
&\int_{\{0<|z|<1\}}\left(f(x+z)-f(x)-\langle \nabla f(x), z\rangle\right)\,N(x,\d z)\\
&+\int_{\{|z|\ge1\}}\left(f(x+z)-f(x)\right)\,N(x,\d z)\\
=&\int_{\{0<|z|<1\}} (z^{(i)})^2\,N(x,\d z)+ \int_{\{|z|\ge1\}}\left((z^{(i)})^2+2(x^{(i)}-x_0^{(i)})z^{(i)}\right)\,N(x,\d z)\\
=&\int_{\R^d\setminus\{0\}} (z^{(i)})^2\,N(x,\d z)+ 2(x^{(i)}-x_0^{(i)})\int_{\{|z|\ge1\}}z^{(i)}\,N(x,\d z),
\end{align*}
where all the integrals above are well defined by \eqref{eq:second-moment} and \eqref{eq:first-moment}.
With this at hand, we obtain
\eqref{eq:n-moment}.

We next show \eqref{eq:n-moment-1}.
We note that $\Ee_{x_0}[|X_t^{(i)}|]<\infty$ because $X_t^{(i)}$ has finite second moment.
Let
$f(x)=x^{(i)}-x_0^{(i)}$. Then, \eqref{eq:mtg-exp} still holds true.
Hence,
following the argument above
and using Lemma \ref{lem:lim-generator}(2) and the dominated convergence theorem,
we can also
obtain \eqref{eq:n-moment-1}.
\end{proof}

\subsubsection{Canonical representation approach}\label{subsect:ooo}
In this part,
we present another approach to Theorem \ref{thm:markov-mtg},
which is based on the canonical representation of the semimartingale
(see \cite[Section 2.5]{BSW} and \cite[Chapter II, Section 2]{JS03}).

\begin{proof}[Proof of Theorem {\rm \ref{thm:markov-mtg}}]
Let $X$ satisfy Assumption \ref{assum:generator} and \eqref{eq:zero-drift}.
Let $\delta_{(s,w)}$ be the Dirac measure at $(s,w)\in (0,\infty)\times (\R^d\setminus \{0\})$
and $\mu^X(\d t,\d z)$ a (random) jumping measure of $X$ defined by
\begin{equation*}
\mu^X(\d t,\d z)
=\sum_{s\in (0,\infty): \Delta X_s\ne 0}\delta_{(s,\Delta X_s)}(\d t, \d z).\end{equation*}
If $\nu^X(\d t,\d z)$ denotes the compensator (or the dual predictable projection) of $\mu^X$,
then by the L\'evy system formula,
\begin{equation}\label{eq:dual-proj}
\nu^X(\d t,\d z)= N(X_{t-},\d z)\,\d t.
\end{equation}
Let
$W_1^{(i)}(z)=z^{(i)}{\bf1}_{\{|z|< 1\}}(z)$ for $z\in \R^d$.
Then by \cite[Theorem 2.44]{BSW} and \cite[Theorems II.2.34 and II.2.42]{JS03},
$X$ is a semimartingale and has the following componentwise decomposition:
\begin{align}\label{eq:semi-mtg}
X_t^{(i)}
=X_0^{(i)}+W_1^{(i)}*(\mu^X-\nu^X)_t
+\sum_{s\in (0,t]: \Delta X_s\geq 1}\Delta X_s^{(i)},
\quad 1\leq i\leq d.
\end{align}
Here $W_1^{(i)}*(\mu^X-\nu^X)$ is a stochastic integration; that is, it is
a locally square integrable and purely discontinuous martingale such that
$$\langle W_1^{(i)}*(\mu^X-\nu^X)\rangle_t
=\int_0^t \left(\int_{\{0<|z|<1\}}(z^{(i)})^2N(X_s,\d z)\right)\,\d s$$
(see, e.g.,
\cite[Definition II.1.27 and its subsequent comment,
and Theorem II.1.33]{JS03}
for the definition and properties of stochastic integrals
with respect to a random measure).
Then, by \cite[Proposition 4.50]{JS03} and \eqref{eq:second-moment},
we have for any $t\geq 0$,
\begin{equation*}
\begin{split}
&\Ee_{x_0}\left[[W_1^{(i)}*(\mu^X-\nu^X)]_t\right]
=\Ee_{x_0}\left[\langle W_1^{(i)}*(\mu^X-\nu^X)\rangle_t\right]\\
&=\Ee_{x_0}\left[\int_0^t \left(\int_{\{0<|z|<1\}}(z^{(i)})^2N(X_s,\d z)\right)\,\d s\right]
\leq t\sup_{x\in \R^d}\int_{\{0<|z|<1\}}(z^{(i)})^2N(x,\d z)<\infty.
\end{split}
\end{equation*}
Hence, according to \cite[Corollary II.6.3]{P04},
\begin{equation}\label{e:note-mar}(W_1^{(i)}*(\mu^X-\nu^X)_t)_{t>0}\hbox{ is a martingale with finite second moment}.
\end{equation}

Let $W_2^{(i)}(z)=z^{(i)}{\bf 1}_{\{|z|\ge1\}}(z)$.
Then by \eqref{eq:second-moment},
$W_2^{(i)}*(\mu^X-\nu^X)$ is a locally square integrable and purely discontinuous martingale.
Furthermore, since \eqref{eq:zero-drift} yields that
$$
W_2^{(i)}*\nu^X_t=\int_0^t\left(\int_{\{|z|\geq 1\}}z^{(i)} N(X_s,\d z)\right)\, \d s=0,
$$
we obtain, by \cite[Proposition II.1.28]{JS03}
and \eqref{eq:first-moment--},
$$
W_2^{(i)}*(\mu^X-\nu^X)
_t
=W_2^{(i)}*\mu^X
_t
=\sum_{s\in (0,t]: \Delta X_s\geq 1}\Delta X_s^{(i)}.
$$
Hence, by \eqref{eq:semi-mtg},
$X$ is a locally square integrable and purely discontinuous martingale
such that
\begin{equation}\label{e:note-mar2}
X^{(i)}_t=X_0^{(i)}+(W_1^{(i)}+W_2^{(i)})*(\mu^X-\nu^X)_t
\end{equation}
and
$$\langle X^{(i)}\rangle_t=\int_0^t \left(\int_{\R^d\setminus\{0\}}(z^{(i)})^2N(X_s,\d z)\right)\, \d s.$$
Following the argument for \eqref{e:note-mar}, we can further claim that
$(X^{(i)}_t)_{t\ge0}$ is a martingale with finite second moment.

Similarly, we have
$$\langle X^{(i)}\pm X^{(j)}\rangle_t=\int_0^t \left(\int_{\R^d\setminus\{0\}}(z^{(i)}\pm z^{(j)})^2N(X_s,\d z)\right)\, \d s,$$
which implies that
$$
\langle X^{(i)}, X^{(j)}\rangle_t
=\frac{1}{4}\left(\langle X^{(i)}+X^{(j)}\rangle_t-\langle X^{(i)}-X^{(j)}\rangle_t\right)
=\int_0^t\left(\int_{\R^d\setminus\{0\}}z^{(i)}z^{(j)}N(X_s,\d z)\right)\,\d s.
$$
Therefore, we obtain \eqref{eq:cov}.  \end{proof}

\subsection{Application: law of the iterated logarithm}\label{subsect:LIL}
Let $X$ be a Feller process
satisfying Assumption \ref{assum:generator}
and \eqref{eq:zero-drift}.
Then $X$ is a conservative Markov process,
and is a
purely discontinuous martingale
with finite second moment
by Theorem \ref{thm:markov-mtg}.
Hence, for any unit vector ${\mathbf r}=(r^{(1)},\dots, r^{(d)})\in {\mathbb R}^d$,
$X^{\mathbf r}:=(\langle X_t,{\mathbf r}\rangle)_{t\ge0}$ is
also a
purely discontinuous martingale
with finite second moment
such that for any $t\geq 0$,
\begin{equation}\label{e:mmm}
\begin{split}
\langle X^{\mathbf r}\rangle_t
&=\sum_{i=1}^d (r^{(i)})^2\langle X^{(i)}\rangle_t
+2\sum_{1\leq i<j\leq d}r^{(i)}r^{(j)} \langle X^{(i)}, X^{(j)}\rangle_t \\
&=\int_0^t\left(\int_{\R^d\setminus\{0\}}\langle {\mathbf r}, z\rangle^2 N(X_s,\d z)\right)\,\d s,
\end{split}
\end{equation}
where the last equality follows from Theorem \ref{thm:markov-mtg}.

In this subsection, we establish Khintchine's law of the iterated logarithm for $X$.
To do so,  we
make use of the stochastic integral representation of $X$.
As in Subsection \ref{subsect:ooo},
let $\mu^X(\d t,\d z)$ be the (random) jumping measure of $X$
and $\nu^X(\d t,\d z)$ the compensator of $\mu^X$
given by \eqref{eq:dual-proj}.
Let
$V^{(i)}(z)=z^{(i)}$ for $z\in \R^d$.
Then,
by \eqref{e:note-mar2},
\begin{align*}
X_t^{(i)}=X_0^{(i)}+V^{(i)}*(\mu^X-\nu^X)_t, \quad 1\leq i\leq d.
\end{align*}
Here $V^{(i)}*(\mu^X-\nu^X)$ is a stochastic integration;
moreover, by Theorem \ref{thm:markov-mtg},
it is defined as a
locally square
integrable and purely discontinuous martingale
such that
\begin{equation*}
\langle V^{(i)}*(\mu^X-\nu^X)\rangle_t
=\int_0^t \left(\int_{\R^d\setminus\{0\}}(z^{(i)})^2N(X_{s-},\d z)\right)\,\d s,\quad t>0.
\end{equation*}

We will further impose the following assumption on the jumping kernel $N(x,\d z)$.
\begin{assum}\label{assum:bound}
The jumping kernel $N(x,\d z)$ satisfies the next two conditions{\rm :}
\begin{enumerate}
\item[{\rm (i)}]
there exist non-negative measures $\nu_1(\d z)$ and $\nu_2(\d z)$ on $\R^d\setminus\{0\}$ such that for $1\le i\le d$,
$$\int_{\R^d\setminus\{0\}} (z^{(i)})^2 \,\nu_1(\d z)>0,\quad \int_{\R^d\setminus\{0\}} |z|^2 \, \nu_2(\d z)<\infty,$$
and for any $x\in \R^d$ and
$A\in {\cal B}(\R^d\setminus\{0\})$,
$$\nu_1(A)\leq N(x,A)\leq \nu_2(A);
$$
\item[{\rm (ii)}]
for $x\in \R^d$, let
$$a_{ij}(x)=\int_{\R^d\setminus\{0\}}z^{(i)}z^{(j)}N(x,\d z),\quad 1\leq i, j\leq d.$$
Then there exist constants
 $0<\lambda\leq \Lambda<\infty$ such that for any
$x,\xi\in \R^d$,
$$\lambda|\xi|^2\leq \sum_{i,j=1}^da_{ij}(x)\xi^{(i)}\xi^{(j)}\leq \Lambda|\xi|^2.$$
\end{enumerate}
\end{assum}

Note that
\begin{equation*}
\begin{split}
\sum_{i,j=1}^da_{ij}(x)\xi^{(i)}\xi^{(j)}
=\int_{\R^d\setminus\{0\}}\langle \xi, z \rangle^2\,N(x,\d z)
&\leq |\xi|^2\int_{\R^d\setminus\{0\}}|z|^2\,N(x,\d z)\\
&\leq |\xi|^2\sup_{x\in \R^d}\int_{\R^d\setminus\{0\}}|z|^2\,N(x,\d z),
\end{split}
\end{equation*}
so it holds that
$$\Lambda\leq \sup_{x\in \R^d}\int_{\R^d\setminus\{0\}}|z|^2\,N(x,\d z).$$

\begin{thm}\label{thm:lil}
Let $X$ be a Feller process such that
Assumptions {\rm \ref{assum:generator}} and {\rm \ref{assum:bound}},
and \eqref{eq:zero-drift} hold.
\begin{enumerate}
\item[{\rm (1)}]
For every $x\in {\mathbb R}^d$ and every unit vector ${\mathbf  r}\in {\mathbb  R}^d$,
\begin{equation*}
\Pp_x \left( \limsup_{t\to \infty}
\frac{X_t^{{\mathbf r}}}
{\sqrt{2\langle X^{{\mathbf r}}\rangle_t \log\log \langle X^{{\mathbf r}}\rangle _t}}=1\right)=1.
\end{equation*}
\item[{\rm (2)}]
For every $x\in \R^d$,
$$\Pp_x \left(\sqrt{\lambda}\leq  \limsup_{t\to \infty}
\frac{|X_t|}{\sqrt{2t\log\log t}}\le \sqrt{\Lambda}\right)=1.$$
\end{enumerate}
\end{thm}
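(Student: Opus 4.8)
The plan is to reduce part~(1) to a law of the iterated logarithm for locally square-integrable martingales, in the spirit of Wang~\cite{W93}, applied under each $\Pp_x$ to the one-dimensional martingale $X^{\mathbf r}=\langle X,\mathbf r\rangle$, and then to deduce part~(2) from part~(1) by a covering argument on the unit sphere together with the two-sided control of the bracket furnished by Assumption~\ref{assum:bound}(ii).

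First I would record the structural input. By Theorem~\ref{thm:markov-mtg}, under Assumption~\ref{assum:generator} and \eqref{eq:zero-drift} the process $X$ is a conservative, purely discontinuous martingale with finite second moment, and, for a fixed unit vector $\mathbf r$, $X^{\mathbf r}$ is a locally square-integrable purely discontinuous martingale whose predictable quadratic variation is given by \eqref{e:mmm}, with compensated jump measure governed by the L\'evy system \eqref{eq:dual-proj}. Two elementary consequences of Assumption~\ref{assum:bound} are then the key inputs: (a) taking $\xi=\mathbf r$ in part~(ii) gives $\lambda t\le \langle X^{\mathbf r}\rangle_t\le \Lambda t$ for all $t\ge 0$, so $\langle X^{\mathbf r}\rangle_\infty=\infty$ $\Pp_x$-a.s., and since $x\mapsto \int_{\R^d\setminus\{0\}}\langle\mathbf r,z\rangle^2\,N(x,\d z)$ is bounded by \eqref{eq:second-moment}, the bracket $t\mapsto\langle X^{\mathbf r}\rangle_t$ is (Lipschitz) continuous; (b) the domination $\nu_1(\cdot)\le N(x,\cdot)\le \nu_2(\cdot)$ with $\int_{\R^d\setminus\{0\}}|z|^2\,\nu_2(\d z)<\infty$ controls the contribution of the large jumps. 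With (a) and (b) in hand I would verify the hypotheses of the martingale LIL: non-degeneracy and continuity of the bracket are immediate from (a), and the Lindeberg-type smallness of the jumps at the relevant scale follows by estimating, via \eqref{eq:dual-proj} and $N(x,\cdot)\le\nu_2$, the relevant expression by a constant multiple of $(\log\log t)^{-1}\int_{\{|z|>c\sqrt{t/\log\log t}\}}|z|^2\,\nu_2(\d z)$, which tends to $0$ as $t\to\infty$ since $\nu_2$ has finite second moment and $\log\log t\to\infty$. This gives part~(1) under every $\Pp_x$.

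For part~(2), the upper bound is obtained as follows: fix $\varepsilon\in(0,1)$ and a finite $\varepsilon$-net $\{\mathbf r_1,\dots,\mathbf r_K\}$ of the unit sphere, chosen so that $|y|\le (1-\varepsilon)^{-1}\max_{1\le k\le K}\langle y,\mathbf r_k\rangle$ for every $y\in\R^d$; then $|X_t|\le (1-\varepsilon)^{-1}\max_k X^{\mathbf r_k}_t$, and combining part~(1) for each $\mathbf r_k$ with $\langle X^{\mathbf r_k}\rangle_t\le \Lambda t$ and $\log\log(\Lambda t)\sim\log\log t$ yields $\limsup_{t\to\infty}|X_t|/\sqrt{2t\log\log t}\le (1-\varepsilon)^{-1}\sqrt{\Lambda}$; letting $\varepsilon\downarrow 0$ gives the upper estimate. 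For the lower bound I would apply part~(1) to a single direction $\mathbf r=(1,0,\dots,0)$: along a random sequence $t_n\uparrow\infty$ realizing the $\limsup$, using $|X_{t_n}|\ge X^{\mathbf r}_{t_n}$, $\langle X^{\mathbf r}\rangle_{t_n}\ge\lambda t_n$ and $\log\log(\lambda t_n)\sim\log\log t_n$, one gets $\liminf_n |X_{t_n}|/\sqrt{2t_n\log\log t_n}\ge\sqrt{\lambda}$, hence the lower estimate. The main obstacle is the careful checking in part~(1) that $X^{\mathbf r}$ satisfies the hypotheses of the martingale LIL — above all the non-degeneracy $\langle X^{\mathbf r}\rangle_\infty=\infty$ and the negligibility of the jumps at scale $\sqrt{\langle X^{\mathbf r}\rangle_t\log\log\langle X^{\mathbf r}\rangle_t}$; these are exactly what Assumption~\ref{assum:bound} is designed to provide, part~(ii) forcing linear (hence unbounded) growth of the bracket and the upper envelope $\nu_2$ in part~(i), having finite second moment, making the large-jump contribution vanish. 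Once part~(1) is in place, part~(2) is a routine covering/sandwiching argument needing no further probabilistic input.
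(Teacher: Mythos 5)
Your proposal is correct and follows essentially the same route as the paper: part (1) is obtained by applying the martingale LIL of Wang \cite{W93} to $X^{\mathbf r}$, with Assumption \ref{assum:bound} supplying exactly the two hypotheses you identify (the two-sided linear bounds $\lambda t\le\langle X^{\mathbf r}\rangle_t\le\Lambda t$ from part (ii), and the domination of the jump measure by $\nu_2$ with finite second moment from part (i)); part (2) then follows by the same sandwiching of $\langle X^{\mathbf r}\rangle_t$ between $\lambda t$ and $\Lambda t$ together with a finite covering of the unit sphere. The only cosmetic difference is that the paper verifies Wang's Assumption A explicitly via the normalized clock $C_t=\int_0^t F(X_s)\,\d s$ and the density matrix $S_t$, whereas you phrase the same verification as a Lindeberg-type condition; the conclusions coincide.
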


\begin{proof}
Let $X$ be a Feller process satisfying
Assumptions {\rm \ref{assum:generator}} and {\rm \ref{assum:bound}},
and \eqref{eq:zero-drift}.
Then, by Theorem \ref{thm:markov-mtg},
$X$ is a
purely discontinuous martingale with finite second moment.

We first prove (1) by applying \cite[Theorem 1]{W93} to $X$.
To do so, we see that $X$ satisfies Assumption A (i) and (ii) of \cite{W93}.
Let
$$F(x):=\int_{\R^d\setminus\{0\}}|z|^2\,N(x,\d z)$$
and
$$c_1:=\int_{\R^d\setminus\{0\}}|z|^2\,\nu_1(\d z), \quad c_2:=\int_{\R^d\setminus\{0\}}|z|^2\,\nu_2(\d z).$$
Then, by Assumption \ref{assum:bound} (i),
$c_1$ and $c_2$ are finite positive constants and
\begin{equation}\label{eq:f-bound}
c_1\leq  F(x)\leq c_2, \quad  x\in \R^d.
\end{equation}
Hence if we define
$$C_t:=\sum_{i=1}^d\langle X^{(i)}\rangle_t
=\int_0^t F(X_s)\,{\rm d}s
=\int_0^t F(X_{s-})\,{\rm d}s,\quad t\ge0,$$
then $(C_t)_{t\geq 0}$ is a predictable increasing process such that
$$\Pp_x\left(\lim_{t\rightarrow\infty}C_t=\infty\right)=1.$$
Let
\begin{equation*}
N_t({\rm d}z):=\frac{1}{F(X_{t-})}\,N(X_{t-}, {\rm d}z).
\end{equation*}
Then, by \eqref{eq:dual-proj},
\begin{equation}\label{eq:decomp}
\nu^X(\d t, \d z)=N_t(\d z)\,\d C_t.
\end{equation}

Define a $d\times d$-symmetric matrix $S_t$ by
$$S_t:=\frac{1}{F(X_{t-})}(a_{ij}(X_{t-}))_{1\leq i,j\leq d}.$$
Note that each entry of $S_t$ is a predictable density function
of the predictable quadratic variation of $X$ in \eqref{eq:cov}
with respect to $C_t$.
If we let
$\Lambda_1:=\Lambda/c_1$ and $\lambda_1:=\lambda/c_2$,
then Assumption \ref{assum:bound} implies that
$\Lambda_1I-S_t$ and $S_t-\lambda_1I$ are nonnegative definite matrices,
where $I$ is a $d\times d$-unit matrix.
Combining this with \eqref{eq:decomp},
we have verified Assumption A (i) of \cite{W93} for $X$.

By \eqref{eq:f-bound}, we have for any $t>0$ and $A\in {\cal B}(\R^d\setminus\{0\})$,
$$N_t(A)\leq \frac{1}{c_1}\nu_2(A).$$
This implies that $X$ satisfies also Assumption A (ii) of \cite{W93}.

Let ${\bf r}$ be a unit vector in $\R^d$.
Since \eqref{e:mmm} and Assumption \ref{assum:bound} (ii)  yield that
\begin{equation}\label{eq:qv-lower}
\langle X^{{\bf r}}\rangle_t
=\int_0^t\left(\int_{\R^d\setminus\{0\}}\langle {\mathbf r},z\rangle^2\, N(X_s,\d z)\right)\,\d s
\geq \lambda t,
\end{equation}
we obtain
\begin{equation*}
\Pp_x\left(\lim_{t\rightarrow\infty}\langle X^{{\mathbf r}}\rangle_t=\infty\right)=1.
\end{equation*}
Therefore, (1) follows by applying \cite[Theorem 1]{W93} to $X$.

We next prove (2) in the same way
as for the law of the iterated logarithm for the multidimensional Brownian motion
(see, e.g., \cite[Exercise 1.5.17]{St11}).
Let ${\bf r}$ be a unit vector in $\R^d$.
Since $|X_t|\geq |
X_t^{{\mathbf r}}|$, we have, by \eqref{eq:qv-lower},
for large $t$,
\begin{align*}
\frac{|X_t|}{\sqrt{2t\log\log t}}
\geq \frac{|
X_t^{{\mathbf r}}|}{\sqrt{2t\log\log t}}
&=\frac{\sqrt{2\langle X^{{\bf r}}\rangle_t\log\log \langle X^{{\bf r}}\rangle_t}}{\sqrt{2t\log\log t}}
\frac{|
X_t^{{\mathbf r}}|}{\sqrt{2\langle X^{{\bf r}}\rangle_t\log\log \langle X^{{\bf r}}\rangle_t}}\\
&\geq \sqrt{\lambda}\frac{\sqrt{\log\log (\lambda t)}}{\sqrt{\log\log t}}
\frac{|
X_t^{{\mathbf r}}|}{\sqrt{2\langle X^{{\bf r}}\rangle_t\log\log \langle X^{{\bf r}}\rangle_t}}.
\end{align*}
Hence, according to (1),  for every $x\in \R^d$,
\begin{equation}\label{e:lll}\Pp_x \left( \limsup_{t\to \infty}
\frac{|X_t|}{\sqrt{2t\log\log t}}\ge\sqrt{\lambda}\right)=1.\end{equation}

Since \eqref{e:mmm} and Assumption \ref{assum:bound} (ii) imply that
for any unit vector ${\mathbf r}\in \R^d$,
$$
\langle X^{{\mathbf r}}\rangle_t
=\int_0^t\left(\int_{\R^d\setminus\{0\}}\langle {\mathbf r},z\rangle^2\,N(X_s,\d z)\right)\,\d s
\leq \Lambda t,
$$
we have, by \eqref{eq:qv-lower} again,
for large $t$,
\begin{align*}
\frac{|X_t^{{\mathbf r}}|}{\sqrt{2t\log\log t}}
&=\frac{\sqrt{2\langle X^{{\mathbf r}}\rangle_t\log\log \langle X^{{\mathbf r}}\rangle_t}}{\sqrt{2t\log\log t}}
\frac{|X_t^{{\mathbf r}}|}{\sqrt{2\langle X^{{\mathbf r}}\rangle_t\log\log \langle X^{{\mathbf r}}\rangle_t}}\\
&\leq \frac{\sqrt{2\Lambda t\log\log (\Lambda t)}}{\sqrt{2t\log\log t}}
\frac{|X_t^{{\mathbf r}}|}{\sqrt{2\langle X^{{\mathbf r}}\rangle_t\log\log \langle X^{{\mathbf r}}\rangle_t}}.
\end{align*}
Then, due to (1),
\begin{equation}\label{eq:lil-component}
\limsup_{t\rightarrow\infty}\frac{|X_t^{{\mathbf r}}|}{\sqrt{2t\log\log t}}\leq \sqrt{\Lambda},\quad a.s.
\end{equation}

Noting that $|X_t|\leq \sqrt{d}\max_{1\leq i\leq d}|X_t^{(i)}|$,
we obtain
from \eqref{e:lll} and \eqref{eq:lil-component} that
for almost surely there is a finite random variable
$t_0:=t_0(\omega)>e$ so that
$$\sup_{t\geq t_0
}\frac{|X_t|}{\sqrt{2t\log\log t}}\in (0,\infty)$$
being bounded by two positive (non-random) constants $C_1\le C_2$.
Let $\{{\bf r}_n
\}_{n\ge1}$ be a family of unit vectors in $\R^d$
forming a dense set in the unit sphere $S^{d-1}$.
Then, for any $\varepsilon\in (0,2C_2)$, there exists $l\geq 1$ such that
$$S^{d-1}\subset \bigcup_{k=1}^lB({\bf r}_k,\varepsilon/(2C_2)).$$
Hence if we let
$$R_t:=\frac{X_t}{\sqrt{2t\log\log t}},$$
then almost surely,
for any $\varepsilon>0$ and $t\ge t_0$
there exists a random variable $j:=j(\varepsilon,t, \omega)\in \{1,\dots, l\}$ such that
\begin{align*}
|R_t/|R_t|-\langle R_t/|R_t|, {\bf r}_j\rangle{\bf r}_j|
&\leq |R_t/|R_t|-{\bf r}_j|+|1-\langle R_t/|R_t|, {\bf r}_j\rangle|\\
&=|R_t/|R_t|-{\bf r}_j|+|R_t/|R_t|-{\bf r}_j|^2/2\leq \varepsilon/C_2,
\end{align*}
which
along with the fact that $|R_t|\le C_2$ for all $t\ge t_0$ a.s. shows that
$$|R_t-\langle R_t, {\bf r}_j\rangle{\bf r}_j|\leq \varepsilon \quad  \text{for any $\varepsilon>0$ and $t\geq t_0$, a.s.}$$
Since this inequality implies that
$$
|R_t|\leq \max_{1\leq k\leq l}|\langle R_t, {\bf r}_k\rangle|+\varepsilon
\quad  \text{for any $\varepsilon>0$ and $t\geq t_0$, a.s.},
$$
we have, by \eqref{eq:lil-component},
$$
\limsup_{t\rightarrow\infty}|R_t|
\leq\limsup_{t\rightarrow\infty}\max_{1\leq k\leq l}|\langle R_t, {\bf r}_k\rangle|+\varepsilon
\leq \sqrt{\Lambda}+\varepsilon \quad \text{for any $\varepsilon>0$, a.s.}
$$
Letting $\varepsilon\rightarrow  0$, we get
$$
\limsup_{t\rightarrow\infty}\frac{|X_t|}{\sqrt{2t\log\log t}}\leq \sqrt{\Lambda},\quad \text{a.s.}
$$
Hence the proof is complete.
\end{proof}

\begin{rem}\rm
We do not know the zero-one law for the tail events of $X$
(see \cite[Theorem 2.10]{KKW} and references therein
for symmetric jump processes with heat kernel estimates).
In particular, it is not clear
whether there exists a positive non-random constant $c_0$ such that
\begin{equation*}
\Pp_{x_0}\left(\limsup_{t\rightarrow \infty}\frac{|X_t|}{\sqrt{2t\log\log t}}=c_0\right)=1.
\end{equation*}
On the other hand, since
$$
\langle X^{(i)}\rangle_t\geq t\inf_{x\in \R^d}\int_{\R^d\setminus\{0\}}(z^{(i)})^2\,N(x,\d z),\quad 1\leq i\leq d, $$
it follows from the same argument as for the proof of {\rm (2)} that
$$
\Pp_x\left(\limsup_{t\rightarrow\infty}\frac{|X_t|}{\sqrt{2t\log\log t}}
\geq \max_{1\leq i\leq d}\left[\inf_{x\in \R^d}\int_{\R^d\setminus\{0\}}(z^{(i)})^2\,N(x,\d z)\right]\right)=1.
$$
\end{rem}

\subsection{Examples}\label{section00}
In this subsection, we provide a class of Feller processes
which are
purely discontinuous martingales with finite second moment,
and
satisfy Khintchine's law of the iterated logarithm.
For any $u\in C_c^{\infty}(\R^d)$, let
$$Lu(x)=L_0u(x)+Bu(x),$$ where
$$L_0u(x)
=\int_{\{0<|z|<1\}}\left(u(x+z)-u(x)-\langle \nabla u(x), z\rangle \right)
\frac{c(x)}{|z|^{d+\alpha(x)}}\,\d z,$$ and
$$Bu(x)=\int_{\{|z|\geq 1\}} \left(u(x+z)-u(x) \right) n_0(x,z)\,\d z.$$ Here,
$c(x)$ and $\alpha(x)$ are positive measurable functions on $\R^d$,
and $n_0(x,z)$ is a non-negative Borel measurable function on
$\R^d\times \{z\in \R^d:|z|\ge1\}$.

We will impose the following conditions on $\alpha(x)$, $c(x)$ and $n_0(x,z)$, respectively.
\begin{assum}\label{assum:index}
\begin{itemize}
\item[{\rm (i)}] The index function $\alpha(x)$ satisfies the next conditions:
\begin{itemize}
\item[{\rm (i-1)}] $0<\inf_{x\in \R^d}\alpha(x)\leq \sup_{x\in \R^d}\alpha(x)<2${\rm ;}
\item[{\rm (i-2)}] $\lim_{r\to0} |\log r|\left[\sup_{|x-y|\le r} |\alpha(x)-\alpha(y)|\right]=0;$
\item[{\rm (i-3)}] $\int_0^1 \sup_{|x-y|\le r}|\alpha(x)-\alpha(y)|r^{-1} \,\d r<\infty.$
\end{itemize}
\item [{\rm (ii)}] The coefficient $c(x)$ is continuous
and
$0<\inf_{x\in \R^d}c(x)\le \sup_{x\in \R^d}c(x)<\infty$.
\item[{\rm (iii)}] The function $n_0(x,z)$ satisfies the following conditions:
\begin{itemize}
\item[{\rm (iii-1)}] there is a non-negative Borel
measurable
function $\tilde n_0(x)$ on
$\{x\in \R^d:|x|\ge1\}$ such that
$$\int_{\{|z|\ge1\}}|z|^2\tilde n_0(z)\, \d z<\infty$$
and for any $x,z \in \R^d$ with $|z|\geq 1$,
$$n_0(x,z)\le \tilde n_0(z);$$
\item[{\rm (iii-2)}] for almost every
$z\in \R^d$ with $|z|\ge1$,
the function $x\mapsto n_0(x,z)$ is continuous on $\R^d${\rm ;}
\item[{\rm (iii-3)}]
for any $x\in \R^d$,
$$\int_{\{|z|\geq 1\}}z^{(i)} n_0(x,z)\,\d z=0, \quad  1\le i\le d.$$
\end{itemize}
\end{itemize}
\end{assum}

\begin{prop}\label{prop:feller}
Under
Assumption {\rm \ref{assum:index}},
$(L, C_c^{\infty}(\R^d))$ is closable on $C_{\infty}(\R^d)$
and its closure is the generator of a Feller semigroup.
Furthermore, let $X:=\{(X_t)_{t\geq 0}, (\Pp_x)_{x\in \R^d}\}$
be the Feller process on $\R^d$
associated with the closure of $(L, C_c^{\infty}(\R^d))$.
Then $X$ is a
purely discontinuous martingale with finite second moment,
and satisfies Khintchine's law of the iterated logarithm.
\end{prop}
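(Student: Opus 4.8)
The plan is to split the argument into two independent steps: (I) show that $(L,C_c^{\infty}(\R^d))$ is closable on $C_{\infty}(\R^d)$ and that its closure generates a Feller semigroup; and (II) verify that the Feller process $X$ attached to this closure satisfies all the hypotheses of Theorems \ref{thm:markov-mtg} and \ref{thm:lil}, from which every assertion about $X$ is then immediate.

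For step (I), I would argue as follows. The operator $L_0$ is a variable-order stable-like operator (truncated to small jumps) whose small-jump behaviour is governed by the index $\alpha(x)$ and the coefficient $c(x)$; conditions (i-1)--(i-3) on $\alpha(x)$, together with the continuity and two-sided boundedness of $c(x)$ in Assumption \ref{assum:index}(ii), are precisely the log-Dini/Dini-type regularity hypotheses under which such operators are known to be closable on $C_{\infty}(\R^d)$ with Feller-generating closure, so here one invokes the existing theory of stable-like generators. The remaining term $Bu(x)=\int_{\{|z|\ge1\}}(u(x+z)-u(x))\,n_0(x,z)\,\d z$ is a \emph{bounded} linear operator on $C_{\infty}(\R^d)$: by (iii-1), $\|Bu\|_{\infty}\le 2\|u\|_{\infty}\int_{\{|z|\ge1\}}\tilde n_0(z)\,\d z$ with $\int_{\{|z|\ge1\}}\tilde n_0(z)\,\d z\le\int_{\{|z|\ge1\}}|z|^2\tilde n_0(z)\,\d z<\infty$, while (iii-2) and dominated convergence give $B(C_{\infty}(\R^d))\subset C_{\infty}(\R^d)$. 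Since $L=L_0+B$ retains the L\'evy-type form \eqref{eq:generator} and hence obeys the positive maximum principle, a bounded-perturbation argument upgrades the generation statement for $\overline{L_0}$ to one for $\overline{L}=\overline{L_0}+B$; in particular $C_c^{\infty}(\R^d)\subset D(\overline L)$ with $\overline L=L$ there.

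For step (II), observe that $X$ has jumping kernel
\[
N(x,\d z)=\frac{c(x)}{|z|^{d+\alpha(x)}}{\bf 1}_{\{0<|z|<1\}}(z)\,\d z+n_0(x,z){\bf 1}_{\{|z|\ge1\}}(z)\,\d z,
\]
so $L$ indeed has the form \eqref{eq:generator}. I would then check, in order: (a) Assumption \ref{assum:generator}: part (i) holds by step (I), and \eqref{eq:second-moment} follows from $\int_{\{0<|z|<1\}}|z|^2\frac{c(x)}{|z|^{d+\alpha(x)}}\,\d z\le\big(\sup_x c(x)\big)\int_{\{0<|z|<1\}}|z|^{2-d-\sup_x\alpha(x)}\,\d z<\infty$ (finite because $\sup_x\alpha(x)<2$) together with $\int_{\{|z|\ge1\}}|z|^2 n_0(x,z)\,\d z\le\int_{\{|z|\ge1\}}|z|^2\tilde n_0(z)\,\d z<\infty$; (b) \eqref{eq:zero-drift}, which on $\{|z|\ge1\}$ is exactly (iii-3); (c) Assumption \ref{assum:bound}(i), with $\nu_1(\d z)=\big(\inf_x c(x)\big)|z|^{-d-\inf_x\alpha(x)}{\bf 1}_{\{0<|z|<1\}}(z)\,\d z$ and $\nu_2(\d z)=\big(\sup_x c(x)\big)|z|^{-d-\sup_x\alpha(x)}{\bf 1}_{\{0<|z|<1\}}(z)\,\d z+\tilde n_0(z){\bf 1}_{\{|z|\ge1\}}(z)\,\d z$, using that $|z|<1$ forces $|z|^{-d-\inf_x\alpha(x)}\le|z|^{-d-\alpha(x)}\le|z|^{-d-\sup_x\alpha(x)}$, so that $\nu_1(A)\le N(x,A)\le\nu_2(A)$, while $\int(z^{(i)})^2\,\nu_1(\d z)>0$ and $\int|z|^2\,\nu_2(\d z)<\infty$ follow from $0<\inf_x\alpha(x)\le\sup_x\alpha(x)<2$ and (iii-1); (d) Assumption \ref{assum:bound}(ii): the upper bound is $\sum_{i,j}a_{ij}(x)\xi^{(i)}\xi^{(j)}=\int_{\R^d\setminus\{0\}}\langle\xi,z\rangle^2 N(x,\d z)\le|\xi|^2\sup_x\int_{\R^d\setminus\{0\}}|z|^2 N(x,\d z)=:\Lambda|\xi|^2<\infty$, and the lower bound is obtained by discarding the big-jump part and invoking rotational invariance of the ball and the weight, $\int_{\{0<|z|<1\}}\langle\xi,z\rangle^2|z|^{-d-\beta}\,\d z=\frac{1}{d}|\xi|^2\int_{\{0<|z|<1\}}|z|^{2-d-\beta}\,\d z$ with $\beta=\inf_x\alpha(x)$, yielding $\sum_{i,j}a_{ij}(x)\xi^{(i)}\xi^{(j)}\ge\lambda|\xi|^2$ with $\lambda=\big(\inf_x c(x)\big)\frac{1}{d}\int_{\{0<|z|<1\}}|z|^{2-d-\inf_x\alpha(x)}\,\d z>0$. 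With Assumptions \ref{assum:generator}, \ref{assum:bound} and \eqref{eq:zero-drift} all in force, Theorem \ref{thm:markov-mtg} gives that $X$ is a purely discontinuous martingale with finite second moment, and Theorem \ref{thm:lil} gives Khintchine's law of the iterated logarithm.

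The main obstacle is step (I): establishing closability and the Feller property is exactly where the delicate conditions (i-2) and (i-3) on the variable order $\alpha(x)$ enter, and where one must ensure that $\overline{L_0}$ has a sufficiently large domain and that adjoining the nonlocal bounded operator $B$ preserves generation (this is the point at which a citation to the stable-like generation theory does the real work). Once step (I) is granted, step (II) is a routine matter of elementary estimates for the explicit kernel $N(x,\d z)$ and a direct appeal to the two theorems already established.
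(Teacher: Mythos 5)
Your step (II) is correct and is essentially what the paper does (the paper in fact states the verification of \eqref{eq:zero-drift} and Assumption \ref{assum:bound} more tersely than you do; your explicit choices of $\nu_1,\nu_2$ and the rotational-invariance computation for the ellipticity lower bound are fine). The overall architecture of step (I) — known generation result for the stable-like part, bounded perturbation for the large jumps, positive maximum principle to conclude that the closure of $L$ generates a Feller semigroup — also matches the paper, which invokes exactly such a stability-under-bounded-perturbation result (\cite[Proposition 2.1]{SU10}).

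The gap is in how you reduce to the literature in step (I). The generation theorem you want to cite (Bass \cite[Theorem 2.2]{B88}, or \cite[Theorem 3.31]{BSW}, \cite[Theorem 5.2]{K17}) is stated for the \emph{full-range} stable-like operator
$Au(x)=\int_{\R^d\setminus\{0\}}\bigl(u(x+z)-u(x)-\langle\nabla u(x),z\rangle{\bf 1}_{\{0<|z|<1\}}\bigr)C_{\alpha(x)}|z|^{-d-\alpha(x)}\,\d z$
with the \emph{specific} normalizing constant $C_{\alpha(x)}$, not for your truncated operator $L_0$ with an arbitrary bounded continuous coefficient $c(x)$. So the assertion that $L_0$ itself ``is known to be closable on $C_{\infty}(\R^d)$ with Feller-generating closure'' is not directly supported by a reference; two further moves are needed. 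First, one passes from $A$ to $L_1:=m(\cdot)A$ with $m(x)=c(x)C_{\alpha(x)}^{-1}$, which is where the continuity and two-sided boundedness of $c(x)$ in Assumption \ref{assum:index}(ii) actually enter (multiplication of a Feller generator by such a function again yields a Feller generator, \cite[Theorem 4.1]{BSW}). Second, $L_1$ differs from $L_0$ by the large-jump stable-like term $u\mapsto\int_{\{|z|\ge1\}}(u(x+z)-u(x))c(x)|z|^{-d-\alpha(x)}\,\d z$, so the bounded perturbation must be taken as $B_1=B-(L_1-L_0)$ rather than $B$ alone; one then checks (as in the paper's Lemma \ref{lem:bdd-op}) that \emph{both} pieces of $B_1$ are bounded operators on $C_{\infty}(\R^d)$, including the verification that they map into functions vanishing at infinity, which your dominated-convergence remark does not quite cover. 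None of this changes the final outcome — your decomposition $L=L_0+B$ can be repaired into $L=L_1+B_1$ — but the reduction to Bass's theorem via the normalization $C_{\alpha(x)}$ and the multiplier $m(x)$ is precisely the content of step (I), and it is missing from your sketch.
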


To prove Proposition \ref{prop:feller}, we start from the following operator $(A, C_c^{\infty}(\R^d))$:
$$Au(x)=\int_{\R^d
\setminus\{0\}}
\left(u(x+z)-u(x)-\langle \nabla u(x), z\rangle{\bf 1}_{\{0<|z|<1\}}\right)
\frac{C_{\alpha(x)}}{|z|^{d+\alpha(x)}}\,\d z,\quad u\in C_c^{\infty}(\R^d),$$
where
$$C_{\alpha(x)}=\frac{\alpha(x)2^{\alpha(x)-1}\Gamma((\alpha(x)+d)/2)}{\pi^{d/2}\Gamma(1-\alpha(x)/2)}.$$
By \cite[Theorem 2.2]{B88}
(see also \cite[Theorem 3.31]{BSW} or \cite[Theorem 5.2]{K17}),
there exists a Feller process
$Y:=\{(Y_t)_{t\geq 0}, (\Pp_x)_{x\in \R^d}\}$
under Assumption \ref{assum:index} (i)
such that
\begin{itemize}
\item $Y$ is a unique solution to the $(A,C_c^{\infty}(\R^d))$-martingale problem;
\item $C_c^{\infty}(\R^d)$ is the core of the Feller generator of $Y$.
\end{itemize}
Indeed, as Bass \cite{B88} proved the existence and uniqueness of
the $(A,C_b^2(\R^d))$-martingale problem,
$Y$ is called the stable-like process in the sense of Bass in the literature.

Let $(A,{\cal D}(A))$ be the Feller generator of $Y$.
Note that, by Assumption \ref{assum:index} (ii),
 $m(x)=c(x)C_{\alpha(x)}^{-1}$
is a continuous function on $\R^d$
bounded from below and above by positive constants.
Hence the operator $(L_1,D(L_1))=(m(\cdot)A,D(A))$ is also a Feller generator (see, e.g., \cite[Theorem 4.1]{BSW}).

Next, we consider the following operator $(B_1,C_{\infty}(\R^d))$:
\begin{equation*}
B_1 u(x)=Bu(x)-\int_{\{|z|\geq 1\}}(u(x+z)-u(x))\frac{c(x)}{|z|^{d+\alpha(x)}}\, \d z.
\end{equation*}
Note that for $u\in C_c^{\infty}(\R^d)$,
\begin{equation*}
B_1u=Bu-(L_1-L_0)u.
\end{equation*}
We can claim that
\begin{lem}\label{lem:bdd-op}
Under Assumption {\rm \ref{assum:index}}, the
operator $B_1$ is a bounded linear operator on $C_{\infty}(\R^d)$.
\end{lem}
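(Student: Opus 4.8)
The plan is to first collapse $B_1$ into a single integral operator with an explicit kernel, and then to verify boundedness and the mapping property $B_1\colon C_\infty(\R^d)\to C_\infty(\R^d)$ by elementary size estimates together with the dominated convergence theorem. Writing out the definitions of $B$, $L_0$ and $L_1=m(\cdot)A$, and using that the normalising constant $C_{\alpha(x)}$ appearing in $A$ is exactly cancelled by $m(x)=c(x)/C_{\alpha(x)}$ (so that $(L_1-L_0)u(x)=\int_{\{|z|\ge1\}}(u(x+z)-u(x))\,c(x)|z|^{-d-\alpha(x)}\,\d z$), one obtains for $u\in C_\infty(\R^d)$ the representation
$$B_1 u(x)=\int_{\{|z|\ge 1\}}\bigl(u(x+z)-u(x)\bigr)\,k(x,z)\,\d z,\qquad k(x,z):=n_0(x,z)-\frac{c(x)}{|z|^{d+\alpha(x)}}.$$
Everything then reduces to controlling the kernel $k$.

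The key estimate I would establish is $\sup_{x\in\R^d}\int_{\{|z|\ge1\}}|k(x,z)|\,\d z<\infty$. For the stable-like part, since $\underline\alpha:=\inf_x\alpha(x)>0$ by Assumption \ref{assum:index}(i-1) and $\bar c:=\sup_x c(x)<\infty$ by (ii), one has $c(x)|z|^{-d-\alpha(x)}\le\bar c\,|z|^{-d-\underline\alpha}$ on $\{|z|\ge1\}$, whose integral over $\{|z|\ge1\}$ is finite and independent of $x$. For the large-jump part, (iii-1) gives $n_0(x,z)\le\tilde n_0(z)\le|z|^2\tilde n_0(z)$ on $\{|z|\ge1\}$, so $\int_{\{|z|\ge1\}}n_0(x,z)\,\d z\le\int_{\{|z|\ge1\}}|z|^2\tilde n_0(z)\,\d z<\infty$, again uniformly in $x$. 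Combining these, $B_1u$ is well defined (absolutely convergent integral) and $\|B_1u\|_\infty\le 2\|u\|_\infty\,\sup_x\int_{\{|z|\ge1\}}|k(x,z)|\,\d z$, which is the asserted boundedness; linearity is immediate.

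It then remains to check $B_1 u\in C_\infty(\R^d)$. For continuity at a point $x_0$, I would apply dominated convergence along an arbitrary sequence $x_n\to x_0$: the $u$-differences converge by continuity of $u$, $n_0(x_n,z)\to n_0(x_0,z)$ for a.e.\ $z$ with $|z|\ge1$ by (iii-2), and $c(x_n)|z|^{-d-\alpha(x_n)}\to c(x_0)|z|^{-d-\alpha(x_0)}$ using continuity of $c$ (from (ii)) and of $\alpha$ (a consequence of (i-2), which forces $\sup_{|x-y|\le r}|\alpha(x)-\alpha(y)|\to0$ as $r\to0$); the integrand is dominated by the fixed integrable function $2\|u\|_\infty\bigl(\tilde n_0(z)+\bar c|z|^{-d-\underline\alpha}\bigr)$ on $\{|z|\ge1\}$. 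For the decay at infinity, along any sequence $|x_n|\to\infty$ one has $|u(x_n+z)-u(x_n)|\to0$ for every fixed $z$, since $u$ vanishes at infinity and $|x_n|,|x_n+z|\to\infty$, and the same dominating function applies, so $B_1 u(x_n)\to0$. This completes the proof.

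I do not expect a genuine obstacle here: the argument is a routine ``bounded perturbation'' estimate. The only points that require care are the two $x$-uniform integral bounds in the second paragraph — where one crucially uses $\inf_x\alpha(x)>0$, $\sup_x c(x)<\infty$, and the crude inequality $\tilde n_0(z)\le|z|^2\tilde n_0(z)$, valid precisely on $\{|z|\ge1\}$ — and the small verification that $\alpha$ is continuous so that the stable-like part of $k(x,z)$ depends continuously on $x$.
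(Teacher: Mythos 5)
Your proof is correct and follows essentially the same route as the paper's: a uniform-in-$x$ integrability bound on the kernel (using $\inf_x\alpha(x)>0$, $\sup_x c(x)<\infty$, and $n_0(x,z)\le\tilde n_0(z)\le|z|^2\tilde n_0(z)$ on $\{|z|\ge1\}$) gives boundedness, and dominated convergence gives continuity. The only cosmetic differences are that the paper estimates $B$ and the stable-like tail operator separately rather than through a single combined kernel, and establishes the vanishing at infinity by an $\varepsilon$--$R$ splitting of the integral instead of your equally valid, slightly cleaner direct dominated-convergence argument.
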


\begin{proof}
For $u\in C_{\infty}(\R^d)$, let
$$B_2u(x)=\int_{\{|z|\geq 1\}}(u(x+z)-u(x))\frac{c(x)}{|z|^{d+\alpha(x)}}\, \d z.$$
Then, we will prove that
both $B$ and $B_2$ are bounded linear operators on $C_{\infty}(\R^d)$.
If it holds, then we can prove the assertion.
For simplicity, we verify the conclusion only
for the operator $B$.

First, by Assumption
\ref{assum:index}
(iii-1),
there is a constant $C>0$ such that for any $u\in C_{\infty}(\R^d)$,
$\|Bu\|_{\infty}\leq C\|u\|_{\infty}$;
according to Assumption
\ref{assum:index}
(iii-1)-(iii-2) and the dominated convergence theorem,
the function $Bu$ is continuous on $\R^d$.
To complete the proof, it is enough  to show that $Bu(x)\rightarrow 0$ as $|x|\rightarrow\infty$.
For any $\varepsilon>0$, there exists a constant $R>1$ such that for any $x\in \R^d$ with $|x|\geq R$,
$|u(x)|\leq \varepsilon$.
By Assumption
\ref{assum:index}
(iii-1), we can also assume that
$$
\int_{\{|z|\ge R\}} n_0(x,z)\,\d z\le \varepsilon.
$$
Now, assume that $|x|\geq 2R$. We write
\begin{align*}
|Bu(x)|\le
& \int_{\{|z|\ge1\}}|u(x)|n_0(x,z)\,\d z+ \int_{\{|z|\ge1\}}|u(x+z)|n_0(x,z)\,\d z\\
\le
& \varepsilon \int_{\{|z|\ge1\}} n_0(x,z)\,\d z +\int_{\{|z|\ge1, |x+z|\ge R\}}|u(x+z)|n_0(x,z)\,\d z\\
&+ \int_{\{|z|\ge1, |x+z|< R\}}|u(x+z)|n_0(x,z)\,\d z \\
\le &2 \varepsilon  \sup_{x\in \R^d}\int_{\{|z|\ge1\}} n_0(x,z)\,\d z
+ \|u\|_\infty\int_{\{|z|\ge R\}} n_0(x,z)\,\d z\\
\le & \varepsilon \left(2\sup_{x\in \R^d}\int_{\{|z|\ge1\}} n_0(x,z)\,\d z+\|u\|_\infty\right),
\end{align*}
which yields the desired assertion.
\end{proof}

\begin{proof}[Proof of Proposition {\rm \ref{prop:feller}}]
Recall that $L_1=m(\cdot)A$ and $m(x)=c(x)C_{\alpha(x)}^{-1}$
is a continuous function on $\R^d$
bounded from below and above by positive constants.
Therefore,  $(L_1,C_c^{\infty}(\R^d))$ is closable on $C_{\infty}(\R^d)$.
Since $B_1$ is bounded on $C_{\infty}(\R^d)$ by Lemma {\rm \ref{lem:bdd-op}} and $L=L_1+B_1$,
$(L,C_c^{\infty}(\R^d))$ is also closable on $C_{\infty}(\R^d)$.
As $L$ satisfies the positive maximum principle,
the closure of $(L, C_c^{\infty}(\R^d))$ is the generator of a Feller semigroup
by {\rm \cite[Proposition 2.1]{SU10}}.

Let $X:=\{(X_t)_{t\geq 0}, (\Pp_x)_{x\in \R^d}\}$ be the Feller process on $\R^d$
associated with the closure of $(L, C_c^{\infty}(\R^d))$,
and let $N(x,\d z)$ be the jumping kernel of $X$.
Then $N(x,\d z)$ is absolutely continuous with respect to the Lebesgue measure,
and the density function $n(x,z)$ is given by
\begin{equation*}
n(x,z)=\frac{c(x)}{|z|^{d+\alpha(x)}}{\bf1}_{\{0<|z|<1\}}
+n_0(x,z){\bf1}_{\{|z|\geq 1\}}.
\end{equation*}
Since $N(x,\d z)=n(x,z)\,\d z$ fulfills
the condition  \eqref{eq:zero-drift}  and Assumption \ref{assum:bound}
by Assumption \ref{assum:index},
$X$ is a
purely discontinuous martingale with finite second moment,
and satisfies Khintchine's law of the iterated logarithm, respectively,
by
Theorems \ref{thm:markov-mtg} and \ref{thm:lil}.
\end{proof}

At the end of this
subsection,
we present two examples of $n_0(x,z)$ such that
Assumption
\ref{assum:index}
{\rm (iii)} is satisfied.
\begin{exam}
Let $c_0(x)$ be a continuous function on $\R^d$
which is bounded from below and above by positive constants.
\begin{enumerate}
\item[{\rm (1)}]
Let $\beta_1(x)$ be a positive continuous function on $\R^d$ such that $\inf_{x\in \R^d} \beta_1(x)>2$.
Then the function
$$n_0(x,z)=\frac{c_0(x)}{|z|^{d+\beta_1(x)}} $$
satisfies Assumption
{\rm \ref{assum:index}}
{\rm (iii)}.
\item[{\rm (2)}]
Let $\lambda$ be a positive constant
and $\beta_2(x)$ a positive continuous function on $\R^d$
such that $\inf_{x\in \R^d} \beta_2(x)>0$.
Then the function
$$n_0(x,z)=  c_0(x) e^{-\lambda |z|^{\beta_2(x)}}$$
satisfies Assumption
{\rm \ref{assum:index}}
{\rm (iii)}.
\end{enumerate}
\end{exam}

\section{Martingale nature and LIL of Hunt processes}\label{section3}

In this section, we discuss the martingale property
and Khintchine's  law of the iterated logarithm
for a class of jump-type Hunt processes generated
by regular lower bounded semi-Dirichlet forms.

\subsection{Regular lower bounded semi-Dirichlet forms}
\label{subsect:semi-dirichlet}
In this subsection,
we recall the notion of regular lower bounded semi-Dirichlet forms
by following \cite[Section 1]{O} and \cite{FU}.
Let $M$ be a locally compact separable metric space
and $\mu$ a positive Radon measure on $M$ with full support.
Let $(\eta,\F)$ be a bilinear form on $L^2(M;\mu)$,
and $\eta_{\alpha}(u,u)=\eta(u,u)+\alpha\|u\|_{L^2(M;\mu)}^2$
for $\alpha\geq 0$.
We say that $(\eta,\F)$ is a lower bounded closed form,
if there exists $\alpha_0\geq 0$ such that the next three conditions hold:
\begin{enumerate}
\item[(i)]
$\eta_{\alpha_0}(u,u)\geq 0$ for any $u\in \F$;
\item[(ii)]
there exists $K\geq 1$ such that for any $u,v\in \F$,
$|\eta(u,v)|\leq K\sqrt{\eta_{\alpha_0}(u,u)}\sqrt{\eta_{\alpha_0}(v,v)}$;
\item[(iii)]
$\F$ is complete with respect to
the norm $\|u\|_{\eta_{\alpha}}:=\sqrt{\eta_{\alpha}(u,u)}$
for some/any $\alpha>\alpha_0$.
\end{enumerate}

Let $(\eta,\F)$ be a lower bounded closed form on $L^2(M;\mu)$.
Then $(\eta,\F)$ is called Markovian, if for any $u\in \F$,
$Uu:=0\vee u\wedge 1\in {\cal F}$ and
$\eta(Uu,u-Uu)\geq 0$.
A lower bounded semi-Dirichlet form  is
by definition a lower bounded closed Markovian form.
For a lower bounded semi-Dirichlet form $(\eta,{\cal F})$,
there exists a strongly continuous Markovian semigroup $(T_t)_{t\geq 0}$ on $L^2(M;\mu)$
such that for any $\alpha>\alpha_0$,
the $\alpha$-resolvent
$G_{\alpha}f:=\int_0^{\infty}e^{-\alpha t}T_t f\,\d t$ satisfies
$\eta_{\alpha}(G_{\alpha}f,g)=\int_{M}fg\,\d \mu$
for any $f\in L^2(M;\mu)$ and $g\in {\cal F}$
(\cite[Theorems 1.1.2 and 1.1.5]{O}).
We can further extend $(T_t)_{t>0}$ and $(G_{\alpha})_{\alpha>\alpha_0}$
to $L^{\infty}(M;\mu)$ so that
$\|T_t\|_{\infty}\leq 1$ for all $t>0$ and $\|G_{\alpha}\|_{\infty}\leq 1/\alpha $ for all $\alpha>\alpha_0$
(see the discussion after the proof of \cite[Theorem 1.1.5]{O}).

Let $C_0(M)$ be the set of continuous functions on $M$ with compact support.
We say that a lower bounded semi-Dirichlet form $(\eta,{\cal F})$ is regular,
if $\F\cap C_0(M)$ is dense
both in $L^2(M;\mu)$
with respect to the norm $\|\cdot\|_{\eta_{\alpha}}$ for
some/any $\alpha>\alpha_0$
and in $C_0(M)$ with respect to the uniform norm.

Let $(\eta,\F)$ be a regular lower bounded semi-Dirichlet form on $L^2(M;\mu)$,
and fix a constant  $\alpha>\alpha_0$.
For an open set $O\subset E$,
let ${\cal L}_O:=\{u\in \F :
\text{$u\geq 1$ $\mu$-a.e.\ on $O$}\}$,
and define the capacity of $O$ by
\begin{equation*}
{\Capa}(O)=
\begin{cases}
\inf\left\{\eta_{\alpha}(u,u) :
 u\in {\cal L}_O\right\} & \text{if ${\cal L}_O\ne \emptyset$},\\
\infty & \text{if ${\cal L}_O=\emptyset$}.
\end{cases}
\end{equation*}
The capacity of a set $A\subset M$ is defined by
\begin{equation*}
{\Capa}(A)=\inf\{\Capa(O): \text{$O\subset M$ is open and $A\subset O$}\}.
\end{equation*}
A set $A$ is called exceptional, if ${\Capa}(A)=0$; see \cite[Theorem 3.4.4]{O}.
Then there exist a Borel exceptional
set ${\cal N}_0\subset M$ and a Hunt process
$X:=\{(X_t)_{t\geq 0}, (\Pp_x)_{x\in M\backslash {\cal N}_0}, (\F_t)_{t\geq 0} \}$
properly associated with  $(\eta,\F)$.
Namely, $X$ is a
quasi-left continuous
strong Markov process on $M\setminus {\cal N}_0$
with the quasi-left continuity on the time interval $(0,\infty)$ (see, e.g.,  \cite[Subsection 3.1]{O} for details),
and for any $\alpha>\alpha_0$ and bounded Borel function $f\in L^2(M;\mu)$,
\begin{equation*}
\Ee_x\left[\int_0^{\infty}e^{-\alpha t}f(X_t)\,\d t\right]=G_{\alpha}f(x), \quad \text{$\mu$-a.e.\ $x\in M$}
\end{equation*}
(see, e.g.,  \cite[Theorem 3.3.4]{O} or \cite[Theorem 4.1]{FU}).
Note that $X$ has c\`adl\`ag sample paths by definition,
and
the filtration $(\F_t)_{t\geq 0}$ can be
assumed
complete and right continuous
(see, e.g., \cite[Subsection 3.1]{O}).

For a Borel set $B\subset M$, let
\begin{equation*}
\sigma_{B}=\inf\left\{t>0: X_t\in B\right\},\quad  \hat{\sigma}_B=\inf\left\{t>0 : X_{t-}\in B\right\}.
\end{equation*}
A Borel set ${\cal N}\subset M$ is called properly exceptional,
if $\mu({\cal N})=0$ and
$\Pp_x(\sigma_{{\cal N}}=\hat{\sigma}_{{\cal N}}=\infty)=1$
for any $x\in M\setminus{\cal N}$.
We can take a properly exceptional set
${\cal N}$ so that ${\cal N}\supset {\cal N}_0$ (see, e.g., \cite[Theorem 4.2 (ii)]{FU}).
In particular, $X|_{M\setminus {\cal N}}:=\{(X_t)_{t\geq 0}, (\Pp_x)_{x\in M\backslash {\cal N}}, (\F_t)_{t\geq 0} \}$ is
still a Hunt process properly associated with $(\eta,\F)$.

\subsection{Martingale property and LIL of Hunt processes}
Let ${\rm diag}=\{(x,x): x\in \R^d\}$
be the diagonal set, and
let $J (x,y)$ be a non-negative Borel measurable function
on $\R^d\times \R^d\setminus{\rm diag}$ such that
\begin{equation}\label{eq:levy-type}
\sup_{x\in \R^d}\int_{\R^d\setminus\{x\}}
(1\wedge |x-y|^2 ) J_s(x,y)\,{\rm d}y<\infty
\end{equation}
and
\begin{equation}\label{eq:a-s}
\sup_{x\in \R^d}\int_{\{J_s(x,y) \ne 0\}}\frac{J_a(x,y)^2}{J_s(x,y)}\,{\rm d}y<\infty,
\end{equation}
where
$$J_s(x,y)=\frac{1}{2}(J(x,y)+J(y,x)), \quad J_a(x,y)=\frac{1}{2}(J(x,y)-J(y,x)).$$

For $n\in {\mathbb N}$ and $u, v \in C_c^{\infty}(\R^d)$, we define
$$L_nu(x)=\int_{\{|y-x|\geq 1/n\}}(u(y)-u(x))\,J(x,y)\,{\rm d}y$$
and
$$\eta_n(u,v)=-\int_{\R^d}L_n u(x)v(x)\,\d x.$$
Note that by \eqref{eq:levy-type},
the right hand side of the equality above is absolutely convergent.
Let
$${\cal D}(D)=\left\{u\in L^2(\R^d;\d x)
:\,\iint_{
\R^d\times \R^d\setminus{\rm diag}}(u(x)-u(y))^2J(x,y)\,{\rm d}x\,{\rm d}y<\infty\right\}$$
and
$$D(u,v)=\iint_{
\R^d\times \R^d\setminus{\rm diag}}(u(x)-u(y))(v(x)-v(y))J(x,y)\,{\rm d}x\,{\rm d}y.$$
Then, by \eqref{eq:levy-type} again, $(D,{\cal D}(D))$ is a symmetric Dirichlet form on $L^2(\R^d;\d x)$
such that $C_c^{\infty}(\R^d)\subset {\cal D}(D)$.
Let $\F$ be the closure of $C_c^{\infty}(\R^d)$
with respect to the norm $\|u\|:=\sqrt{D(u,u)+\|u\|_{L^2(\R^d;\d x)}^2}$.
Then $(D,\F)$ becomes a regular symmetric Dirichlet form on $L^2(\R^d;\d x)$.
Furthermore, according to  \cite[Theorem 2.1]{SW} (see also \cite[Proposition 2.1]{FU}),
the limit $$\eta(u,v):=\lim_{n\to\infty}\eta_n(u,v)$$ exists
for all $u,v\in C_c^{\infty}(\R^d)$ such that
$$\eta(u,v)=\frac{1}{2}D(u,v)+\iint_{
\R^d\times \R^d\setminus{\rm diag}}(u(x)-u(y))v(y)J_a(x,y)\,\d x\,\d y.$$
In particular,
$(\eta,\F)$ becomes a regular lower bounded semi-Dirichlet form on $L^2(\R^d;\d x)$.

 \ \

In what follows,
let $X:=\{(X_t)_{t\geq 0}, (\Pp_x)_{x\in \R^d\backslash {\cal N}_0}, (\F_t)_{t\geq 0}\}$
be a Hunt process on $\R^d$ properly associated with $(\eta,{\cal F})$,
where  ${\cal N}_0$ is
an exceptional set as
mentioned in Subsection \ref{subsect:semi-dirichlet}.
According to the Beurling-Deny type decomposition
for semi-Dirichlet forms (see \cite[Theorem 5.2.1]{O}),
there are no local part and no killing term
in the lower bounded semi-Dirichlet form $(\eta,\F)$ given above,
and so the associated process $X$ is also of pure-jump type.
In order to present sufficient conditions on the jumping kernel $J(x,y)$
such that $X$ itself is a
purely discontinuous martingale
with finite second moment,
we will make use full of the expression for the generator associated with $(\eta,\F)$.
For this purpose,
we impose the following assumption on $J(x,y)$.

\begin{assum}\label{assum:jump-semi}\it
The jumping kernel $J(x,y)$ satisfies
the next three conditions{\rm :}
\begin{itemize}
\item[{\rm(i)}]  for any $\varepsilon>0$, $x\in \R^d$ and $1\le i\le d$,
\begin{equation}\label{e:non-drift}
\int_{\{|x-y|\geq \varepsilon\}}(y-x)^{(i)}J(x,y)\,\d y=0;
\end{equation}

\item[{\rm(ii)}] $J(x,y)$ has the second moment in the sense that
\begin{equation}\label{jjj}
\sup_{x\in \R^d}\int
_{\R^d\setminus\{x\}}|x-y|^2 J(x,y)\,dy<\infty;
\end{equation}
\item [{\rm(iii)}] the function
$$x\mapsto \int_{\{|y-x|\geq 1\}}J(x,y)\,\d y $$
belongs to $L^2(\R^d;\d x)\cup L^{\infty}(\R^d;\d x)$.
\end{itemize}
\end{assum}

\begin{lem}\label{prop:semi-dirichlet-mtg}
Let $(L,{\cal D}(L))$ be the {\rm ($L^2$-)}generator of  $(\eta,\F)$.
Under Assumption {\rm \ref{assum:jump-semi}},
we have the following two statements.
\begin{itemize}
\item[{\rm(1)}] $C_c^{\infty}(\R^d)\subset {\cal D}(L)$, and for any $u\in C_c^{\infty}(\R^d)$,
\begin{equation}\label{e:ope1}
Lu(x)
=\int_{\R^d\setminus\{x\}}(u(y)-u(x)-\langle \nabla u(x), y-x\rangle) J(x,y)\,\d y.
\end{equation}
Moreover,
$(L,C_c^{\infty}(\R^d))$
extends to $C_b^2(\R^d)$,
and the expression above remains valid for any $u\in C_b^2(\R^d)$.
\item [{\rm(2)}]
There exists a Borel properly exceptional set
${\cal N}\supset {\cal N}_0$
such that
for
any $u\in C_b^2(\R^d)$,
$$
M_t^{[u]}=u(X_t)-u(X_0)-\int_0^t Lu(X_s)\,\d s,\quad t\geq 0,
$$
is a $\Pp_x$-martingale for each
$x\in \R^d\setminus{\cal N}$.
Moreover, $X|_{M\setminus {\cal N}}$ is conservative.
\end{itemize}
\end{lem}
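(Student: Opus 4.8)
\emph{Plan.} The strategy is to compute the generator of $(\eta,\F)$ explicitly on $C_c^\infty(\R^d)$, extend the formula to $C_b^2(\R^d)$, and then deduce the martingale and conservativeness statements from the Fukushima-type decomposition for semi-Dirichlet forms together with a density/localization argument. For \textbf{(1)}: let $\widetilde Lu(x)$ denote the right-hand side of \eqref{e:ope1}; for $u\in C_b^2(\R^d)$ it is well defined pointwise, since on $\{|y-x|<1\}$ Taylor's theorem bounds the integrand by $C\|D^2u\|_\infty(1\wedge|x-y|^2)J(x,y)$ (integrable by \eqref{jjj}), while on $\{|y-x|\ge1\}$ the term $\langle\nabla u(x),y-x\rangle$ integrates to zero by \eqref{e:non-drift} and $|u(y)-u(x)|\le2\|u\|_\infty$ is paired with $\int_{\{|y-x|\ge1\}}J(x,y)\,\d y<\infty$ (note $J\le2J_s$ and use \eqref{eq:levy-type}, or simply \eqref{jjj}). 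For $u\in C_c^\infty(\R^d)$ I would first check $\widetilde Lu\in L^2(\R^d;\d x)$ — the near-diagonal part is bounded with compact support, and the far part is handled by Cauchy--Schwarz in $y$ followed by Fubini, using $J\le2J_s$, the symmetry of $J_s$ and Assumption \ref{assum:jump-semi}(iii) — and then that $\eta(u,v)=-(\widetilde Lu,v)_{L^2(\R^d;\d x)}$ for all $v\in C_c^\infty(\R^d)$: since $\eta(u,v)=\lim_n\eta_n(u,v)=\lim_n(-\!\int L_nu\,v\,\d x)$ and, using \eqref{e:non-drift} to reinstate the first-order correction in $L_nu$, one has $L_nu\to\widetilde Lu$ pointwise with an $n$-independent majorant on the compact $\supp v$, dominated convergence applies; density of $C_c^\infty(\R^d)$ in $\F$ and continuity of $\eta(u,\cdot)$ extend the identity to $v\in\F$, so $C_c^\infty(\R^d)\subset{\cal D}(L)$ with $Lu=\widetilde Lu$. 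Mollification then puts $C_c^2(\R^d)$ into ${\cal D}(L)$ with the same formula, and $(L,C_c^\infty(\R^d))$ extends to $C_b^2(\R^d)$ by setting $Lu:=\widetilde Lu$, consistently with the above.

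\emph{Part (2): martingale property and conservativeness.} By the Fukushima-type decomposition for regular lower bounded semi-Dirichlet forms (see \cite{O,FU}), for each $w\in{\cal D}(L)$ — in particular $w\in C_c^2(\R^d)$ — the functional $M^{[w]}_t=w(X_t)-w(X_0)-\int_0^tLw(X_s)\,\d s$ is a $\Pp_x$-martingale for quasi-every $x$. I would fix a countable family ${\cal U}\subset C_c^2(\R^d)$ that is $C^2$-dense among functions supported in each ball $\overline{B(0,n)}$, together with mass functions $u_R\in C_c^\infty(\R^d)$ ($u_R\equiv1$ on $B(0,R)$, $0\le u_R\le1$, $\supp u_R\subset B(0,2R)$) and the cutoffs $\phi_l$ of \eqref{e:function-1}, union the associated exceptional sets, and enlarge to a Borel properly exceptional ${\cal N}\supset{\cal N}_0$. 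Approximating an arbitrary $w\in C_c^2(\R^d)$ in $C^2$ by elements of ${\cal U}$, the generator estimates from (1) give $\|Lw_j-Lw\|_\infty\to0$, so bounded convergence upgrades the martingale property of $M^{[w]}$ to all $w\in C_c^2(\R^d)$, for every $x\notin{\cal N}$. Conservativeness of $X|_{\R^d\setminus{\cal N}}$ then follows from the $u_R$: by the computation used for Lemma \ref{lem:lim-generator} one gets $\sup_x|Lu_R(x)|\le C/R$ (the first-order term vanishes by \eqref{e:non-drift}, the near part is $O(R^{-2})$, the far part $O(R^{-1})$ via \eqref{jjj}), so for $x\in B(0,R)$, $\Pp_x(X_t=\Delta)\le 1-\Ee_x[u_R(X_t)]=-\Ee_x\big[\int_0^tLu_R(X_s)\,\d s\big]\le tC/R\to0$, whence $\Pp_x(X_t\in\R^d)=1$ for all $t$.

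\emph{Passage to $C_b^2(\R^d)$, and the main difficulty.} For $u\in C_b^2(\R^d)$ the truncations $u\phi_l$ lie in $C_c^2(\R^d)$, so each $M^{[u\phi_l]}$ is a $\Pp_x$-martingale for $x\notin{\cal N}$. The decisive point is that $C_b^2$-functions have uniformly bounded $L$-image: $\|u\phi_l\|_\infty\le\|u\|_\infty$ and $\|\nabla(u\phi_l)\|_\infty$, $\|D^2(u\phi_l)\|_\infty$ are bounded uniformly in $l$, so the splitting of (1) yields $\sup_l\|L(u\phi_l)\|_\infty<\infty$, and $L(u\phi_l)(y)\to Lu(y)$ for every $y$ by dominated convergence in the defining integral (the analogue of \eqref{eq:lim-general}). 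Hence $M^{[u\phi_l]}_r\to M^{[u]}_r$ pointwise with the uniform bound $2\|u\|_\infty+rC$, the convergence holds in $L^1(\Pp_x)$, and passing to the limit in $\Ee_x[M^{[u\phi_l]}_t\mid\F_s]=M^{[u\phi_l]}_s$ shows that $M^{[u]}$ is a $\Pp_x$-martingale for every $x\notin{\cal N}$. I expect the main obstacle to lie precisely here: making a single properly exceptional set serve the uncountable family $C_b^2(\R^d)$ — the density argument inside $C_c^2(\R^d)$ and the uniformity of the supporting generator estimates — and coping with the nonlocality of $L$, which obstructs naive localization because $L(u\phi_l)$ need not agree with $Lu$ on $\supp\phi_l$. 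If the decomposition invoked only yields a local martingale for $w\in{\cal D}(L)$, this last step must instead be carried out through the exit times $\tau_m$ exactly as in Proposition \ref{lem:moment-formula}, after first establishing $\Ee_x[|X_t|^2]<\infty$ by the Gronwall argument used there.
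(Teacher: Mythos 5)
Your proposal is correct, and for part (1) it follows essentially the same route as the paper: use \eqref{e:non-drift} to reinstate the first-order correction in $L_n u$, verify pointwise well-definedness from \eqref{jjj}, pass to the limit in $\eta_n(u,v)=-\int L_nu\,v\,\d x$ by dominated convergence, and check $Lu\in L^2(\R^d;\d x)$ via Cauchy--Schwarz, Fubini and Assumption \ref{assum:jump-semi}(iii) (the paper delegates this last step to step 2 of the proof of \cite[Theorem 2.2]{SW}, and the extension to $C_b^2(\R^d)$ to \cite{FU} and \cite{BSW}). The real divergence is in part (2): the paper proves it in one line by applying \cite[Theorem 4.3]{FU} to $(L,C_b^2(\R^d))$, merely remarking that the continuity of $Lu$ assumed there is not needed, whereas you reconstruct the content of that theorem by hand --- the $\Pp_x$-martingale property of $M^{[w]}$ for quasi-every $x$ and $w\in{\cal D}(L)$, a countable $C^2$-dense family plus the mass functions $u_R$ and the cutoffs $\phi_l$ whose exceptional sets are unioned and enlarged to a single Borel properly exceptional set, a uniform bound $\|Lw\|_\infty\le C(\|w\|_\infty+\|D^2w\|_\infty)$ to upgrade from the countable family to all of $C_c^2(\R^d)$, conservativeness from $\sup_x|Lu_R(x)|\le C/R$, and the truncation $u\phi_l$ with a dominated-convergence passage to $C_b^2(\R^d)$. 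Each of these steps is sound under Assumption \ref{assum:jump-semi} (in particular the no-drift condition \eqref{e:non-drift} and the second-moment bound \eqref{jjj} do give $\sup_x|Lu_R(x)|\le C/R$, and the single-exceptional-set issue you flag as the main difficulty is resolved exactly by your countable-family-plus-density device, which is also how \cite{FU} handles it). What the citation buys the authors is brevity; what your reconstruction buys is transparency about where each hypothesis enters and independence from the continuity assumption in \cite[Theorem 4.3]{FU}, at the cost of re-deriving the quasi-everywhere martingale property from the semi-Dirichlet form machinery of \cite{O,FU}, which you still must (and do) cite.
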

\begin{proof}
According to \eqref{e:non-drift}, for any $n\ge1$,
$$L_nu(x)=\int_{\{|y-x|\geq 1/n\}}(u(y)-u(x)-\langle
\nabla
u(x),y-x\rangle)\,J(x,y)\,{\rm d}y.$$
Let $L$ be as in
\eqref{e:ope1}. It is obvious that, under
Assumption \ref{assum:jump-semi} (ii), $Lu$ is
pointwisely
well defined for any $u\in C_c^\infty(\R^d)$. Moreover,
\begin{align*}
|Lu(x)-L_nu(x)|=
&\left|\int_{\{|y-x|<1/n\}} (u(y)-u(x)-\langle
\nabla
u(x),y-x\rangle)\,J(x,y)\,{\rm d}y\right|\\
\le
&\|\nabla^2 u\|_\infty \sup_{x\in \R^d}\int_{\R^d\setminus\{x\}} |y-x|^2\,J(x,y)\,{\rm d}y<\infty.
\end{align*}
Then, by
Assumption \ref{assum:jump-semi} (ii) again and the dominated convergence theorem,
for any $f,g\in C_c^\infty(\R^d)$,
$$\lim_{n\to \infty}\eta_n(f,g)=-\lim_{n\to\infty}\int
_{\R^d}
L_nf(x)g(x)\,\d x=-\int
_{\R^d}
Lf(x)g(x)\,\d x.$$
In particular, the equality above shows that the operator $L$ is the generator of  $(\eta,\F)$.
Following the argument in step 2 of \cite[Theorem 2.2]{SW} and using \eqref{eq:levy-type} and
Assumption \ref{assum:jump-semi} (iii),
we know that $L$ maps $C_c^\infty(\R^d)$ into $L^2(\R^d;dx)$.
We also note that the operator $(L,C_c^{\infty}(\R^d))$ extends to $C_b^2(\R^d)$
in a similar way as in \cite[Section 5]{FU} or \cite[Theorem 2.37]{BSW}.
Hence we arrive at the assertion (1).

Applying \cite[Theorem 4.3]{FU} to $(L,C_b^2(\R^d))$,
we can obtain the assertion (2).
We note that, even though \cite[Theorem 4.3]{FU} requires
the continuity of $Lu$ for any
$u\in C_c^{\infty}(\R^d)$,
the proof of this theorem is still true without this assumption.
\end{proof}

According to \eqref{e:non-drift} and Lemma \ref{prop:semi-dirichlet-mtg},
we obtain the statement below
by letting $N(x,\d z):=J(x,x+z)\,\d z$,
and following the generator approach to Theorem \ref{thm:markov-mtg}
(see Subsection \ref{section2.1.1})
and the proof of Theorem \ref{thm:lil}.

\begin{thm}\label{thm:lil-1}
Let Assumption {\rm \ref{assum:jump-semi}} hold.
Then, we have
\begin{enumerate}
\item[{\rm (1)}]
$X$ is a
purely discontinuous martingale such that
for each $t>0$ and $i=1,\dots, d$, $X_t^{(i)}$ has finite second moment and
the quadratic variation of $X$
is given by
\begin{equation*}
\langle X^{(i)}, X^{(j)}\rangle_t
=\int_0^t \left(\int_{\R^d\setminus\{0\}}z^{(i)}z^{(j)}J(X_s,X_s+z)\,\d z\right) \,\d s ,
\quad 1\leq i,j\leq d,\,t>0.
\end{equation*}
\item[{\rm (2)}]
If the kernel $N(x,\d z):=J(x,x+z)\,\d z$ satisfies
Assumption {\rm \ref{assum:bound}},
then the assertion of Theorem {\rm \ref{thm:lil}} is valid
for every $x\in \R^d\setminus{\cal N}$.
\end{enumerate}
\end{thm}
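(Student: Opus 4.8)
The plan is to transfer the machinery of Section \ref{section2} to the semi-Dirichlet setting via the substitution $N(x,\d z):=J(x,x+z)\,\d z$, so that Assumption \ref{assum:jump-semi} becomes a direct analogue of Assumption \ref{assum:generator} together with \eqref{eq:zero-drift}. First I would observe that \eqref{jjj} is precisely \eqref{eq:second-moment} for this kernel, and \eqref{e:non-drift} (applied with $\varepsilon=1$, say) gives \eqref{eq:zero-drift}; moreover Lemma \ref{prop:semi-dirichlet-mtg}(1) supplies the generator representation \eqref{e:ope1}, which coincides with \eqref{eq:generator} once we note that \eqref{e:non-drift} lets us move the compensating term $\langle\nabla u(x),z\rangle\mathbf{1}_{\{|z|<1\}}$ in or out of the large-jump part at will. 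The only structural difference from Section \ref{section2} is that $X$ is now a Hunt process with a distinguished starting set $\R^d\setminus\mathcal N$ rather than a Feller process started from every point; however Lemma \ref{prop:semi-dirichlet-mtg}(2) gives exactly what the generator approach needs, namely that for each $x\in\R^d\setminus\mathcal N$ the process $M^{[u]}$ is a $\Pp_x$-martingale for all $u\in C_b^2(\R^d)$, and that $X|_{M\setminus\mathcal N}$ is conservative.

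With these correspondences in hand, the proof of part (1) is a line-by-line repetition of Subsection \ref{section2.1.1}. I would run Lemma \ref{lem:lim-generator} verbatim (the only inputs are the pointwise generator formula and the second-moment bound \eqref{jjj}), then Proposition \ref{lem:moment-formula} verbatim, using the stopping times $\tau_m=\inf\{t>0:|X_t-x_0|\ge m\}$, the optional stopping theorem, Gronwall's inequality, and conservativeness of $X|_{M\setminus\mathcal N}$ to conclude $\Ee_{x_0}[f(X_t)]\le e^{c_1 t}-1$ for $f(x)=(x^{(i)}-x_0^{(i)})^2$; this gives the finite second moment and the moment identities \eqref{eq:n-moment}--\eqref{eq:n-moment-1} for every $x_0\in\R^d\setminus\mathcal N$. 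The zero-drift condition \eqref{e:non-drift} then kills the drift term, so each $X^{(i)}$ is a $\Pp_{x_0}$-martingale by the Markov property, $(X_t^{(i)})^2-\int_0^t\!\int (z^{(i)})^2 N(X_s,\d z)\,\d s$ is a martingale, and the purely-discontinuous claim follows from the quadratic-variation decomposition \cite[Theorems I.4.52 and I.4.18]{JS03} combined with the L\'evy system formula exactly as in the proof of Theorem \ref{thm:markov-mtg}. The covariation formula follows by polarization, using $f(x)=(x^{(i)}\pm x^{(j)})^2-\text{const}$ in place of $(x^{(i)})^2$.

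For part (2), once $X|_{M\setminus\mathcal N}$ is known to be a conservative purely discontinuous martingale with finite second moment and the predictable covariations are given as in part (1), the hypotheses of Theorem \ref{thm:lil} are met with $N(x,\d z)=J(x,x+z)\,\d z$, and the LIL statements follow by the identical argument: one applies \cite[Theorem 1]{W93} to verify its Assumption A via the bounds in Assumption \ref{assum:bound}, deduces the one-dimensional LIL along every unit vector, and then upgrades to $|X_t|$ by the covering argument over a dense set of directions in $S^{d-1}$. All the statements now hold $\Pp_x$-almost surely for every $x\in\R^d\setminus\mathcal N$.

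The main obstacle is a bookkeeping one rather than a mathematical one: ensuring that the exceptional set $\mathcal N$ furnished by Lemma \ref{prop:semi-dirichlet-mtg}(2) is the same one throughout, and that every a.s.\ statement — conservativeness, the martingale property of $M^{[u]}$ for the countably many test functions $f\phi_l$, and the final LIL — holds simultaneously off this single $\mathcal N$. Since $\mathcal N$ is fixed once and for all by Lemma \ref{prop:semi-dirichlet-mtg}(2) and all subsequent exceptional events are genuinely null under each $\Pp_x$ with $x\in\R^d\setminus\mathcal N$ (not merely quasi-everywhere), no enlargement of $\mathcal N$ is needed, and the transfer goes through cleanly. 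One should also double-check that Assumption \ref{assum:jump-semi}(iii) is used only where \cite[Theorem 2.2]{SW} needs it, namely to guarantee $Lu\in L^2$, which is not required anywhere in the probabilistic arguments of Section \ref{section2} and hence plays no further role here.
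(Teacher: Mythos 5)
Your proposal is correct and follows essentially the same route as the paper, whose entire proof consists of the instruction to set $N(x,\d z):=J(x,x+z)\,\d z$, invoke Lemma \ref{prop:semi-dirichlet-mtg} for the generator representation, the martingale property of $M^{[u]}$ and conservativeness off ${\cal N}$, and then repeat the generator approach of Subsection \ref{section2.1.1} and the proof of Theorem \ref{thm:lil}. Your additional remarks on the single properly exceptional set ${\cal N}$ and on polarization for the off-diagonal covariations are exactly the bookkeeping the paper leaves implicit.
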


\subsection{Examples}
In this subsection, we provide a class of jump-type Hunt processes
generated by regular lower bounded semi-Dirichlet forms
such that they are
purely discontinuous martingales with finite second moment,
and satisfy Khintchine's law of the iterated logarithm.
\begin{exam}\label{e:HUNT}\rm
Let $J(x,y)$ be a non-negative Borel function on
$\R^d\times\R^d\setminus{\rm diag}$
given by
\begin{equation}\label{eq:kernel-exam}
J(x,y)=\frac{c(x)}{|x-y|^{d+\alpha(|x-y|)}}
\end{equation}
such that the following two conditions hold:
\begin{enumerate}
\item[(i)]
$\alpha(r)$ is a positive function on
$(0,\infty)$
such that
\begin{equation}\label{eq:integrable-0}
\int_0^{\infty}  {r^{1-\alpha(r)}}\,\d r<\infty;
\end{equation}
\item[(ii)] $c(x)$ is a function on $\R^d$
bounded from below and above by positive constants, and
$$
\int_0^1 \frac{g_c(r)^2}{r^{1+\alpha(r)}}\,\d r<\infty,
$$ where $$
g_c(r)=\sup_{x,y\in \R^d: |x-y|=r}|c(x)-c(y)|.
$$
\end{enumerate}
Then, the jumping kernel $J(x,y)$ above generates
a regular lower bounded semi-Dirichlet form $(\eta,{\cal F})$
on $L^2(\R^d;\d x)$.
Indeed, by definition,
$$J_s(x,y)=\frac{1}{2}\frac{c(x)+c(y)}{|x-y|^{d+\alpha(|x-y|)}},
\quad J_a(x,y)=\frac{1}{2}\frac{c(x)-c(y)}{|x-y|^{d+\alpha(|x-y|)}}.$$
Then
\begin{align*}
\int_{\R^d}(1\wedge |x-y|^2)J_s(x,y)\,\d y
&=\frac{1}{2}\int_{\R^d}(1\wedge |x-y|^2)\frac{c(x)+c(y)}{|x-y|^{d+\alpha(|x-y|)}}\,\d y\\
&\leq c_1 \int_{\R^d}\frac{1\wedge |x-y|^2}{|x-y|^{d+\alpha(|x-y|)}}\,\d y
=c_2\int_0^{\infty}\frac{1\wedge r^2}{r^{1+\alpha(r)}}\,\d r,
\end{align*}
which implies \eqref{eq:levy-type}.
Since
\begin{align*}
\frac{J_a(x,y)^2}{J_s(x,y)}
=\frac{1}{2}\frac{(c(x)-c(y))^2}{c(x)+c(y)}\frac{1}{|x-y|^{d+\alpha(|x-y|)}}
\leq c_3\frac{(c(x)-c(y))^2}{|x-y|^{d+\alpha(|x-y|)}}
\leq c_4\frac{g_c(|x-y|)^2}{|x-y|^{d+\alpha(|x-y|)}},
\end{align*}
we also obtain \eqref{eq:a-s}.

It is obvious that
Assumption \ref{assum:jump-semi} (i) holds.
By \eqref{eq:integrable-0} and the calculations above,
one can see that
Assumption \ref{assum:jump-semi} (ii) and (iii) are satisfied.
Since the kernel $N(x,\d z)=J(x,x+z)\,\d z$ fulfills Assumption \ref{assum:bound},
the statement of Theorem \ref{thm:lil-1} holds
for a Hunt process $X$ generated by $(\eta,{\cal F})$.

\ \

The concrete example for $\alpha(r)$ and $c(x)$
satisfying
the conditions (i) and (ii) above is as follows.
Let $\alpha(r)$ be a locally bounded
and positive measurable function on $(0,\infty)$ such that
\begin{equation*}
\limsup_{r\rightarrow+0}\alpha(r)<2, \quad \liminf_{r\rightarrow\infty}\alpha(r)>2.
\end{equation*}
Let $c(x)$ be a Lipschitz continuous function on $\R^d$
bounded from below and above
by positive constants.

\end{exam}

\begin{rem}\rm
To the best of our knowledge,
it is unknown in the literature
whether the martingale problem is well-posed or not
for the operator $(L,C_c^\infty(\R^d))$
defined by \eqref{e:ope1}
with the jumping kernel $J(x,y)$ in \eqref{eq:kernel-exam}.
In particular, we do not know whether $(L,C_c^\infty(\R^d))$ can generate a Feller semigroup or not.

On the other hand, we can construct a Hunt process on $\R^d$
associated with the jumping kernel $J(x,y)$ by using the Dirichlet form theory.
The price is to take into consideration the exceptional set
restricting the initial points of the process.
\end{rem}

\ \

We further present examples of the jumping kernels $J(x,y)$
such that  the statement of Theorem \ref{thm:lil-1} is valid
for  the associated Hunt processes.
These examples can be regarded as variants of the jumping kernels
given in \cite[Subsection 6.2, (9) and (13)]{FKV15}.

\begin{exam}\rm
\begin{enumerate}
\item[(1)]
Let $A$ be a Borel set of $\R^d\setminus\{0\}$ with positive Lebesgue measure
such that
\begin{enumerate}
\item[(a)] $A=-A \ (:=\{x\in \R^d\setminus\{0\} \mid -x\in A\}$);
\item[(b)] for any $(x^{(1)},\dots, x^{(d)})\in A$ and for any permutation $\{i_1,\dots,i_d\}$ of $\{1,\dots, d\}$,
$(x^{(i_1)},\dots, x^{(i_d)})\in A$.
\end{enumerate}
Let $J(x,y)$ be a non-negative Borel function on
$\R^d\times\R^d\setminus{\rm diag}$
 given by
\begin{equation*}
J(x,y)=\frac{c(x)}{|x-y|^{d+\alpha(|x-y|)}}{\bf 1}_{\{y-x\in A\}}.
\end{equation*}
Suppose that the functions $\alpha(r)$ and $c(x)$ satisfy (i) and (ii) as these in Example \ref{e:HUNT}.
Then, in the same manner as for \eqref{eq:kernel-exam},
we can show that the statement of Theorem \ref{thm:lil-1} is true
for the Hunt process
generated by a lower bounded semi-Dirichlet form
with the jumping kernel $J(x,y)$.
\item[(2)]
Let $n\in {\mathbb N}$.
Let $A_i \ (1\leq i\leq n)$ be a Borel set of $\R^d\setminus\{0\}$ with positive Lebesgue measure
such that (a) and (b) in (1) are fulfilled.
For each $i\in \{1,\dots, n\}$,
let $\alpha_i(r)$ be a positive function on $(0,\infty)$
and $c_i(x)$ a function on $\R^d$
such that (i) and (ii) in Example \ref{e:HUNT} are satisfied.
Then the same consequence as in (1) is valid for the
jumping kernel
\begin{equation*}
J(x,y)=\sum_{i=1}^n\frac{c_i(x)}{|x-y|^{d+\alpha_i(|x-y|)}}{\bf 1}_{\{y-x\in A_i\}}.
\end{equation*}
\end{enumerate}
\end{exam}

\medskip

\noindent \textbf{Acknowledgements.}
The authors would like to thank
Professor Takashi Kumagai and Professor Masayoshi Takeda
for their valuable comments on the draft of this paper.
They are grateful to the referee and the associate editor
for their valuable comments and suggestions,
which improved the results and presentation of the manuscript.
The research of Yuichi Shiozawa is supported
in part by JSPS KAKENHI No.\ JP17K05299.
The research of Jian Wang is supported by the National Natural Science Foundation of China (No.\ 11831014),
the Program for Probability and Statistics:
Theory and Application (No.\ IRTL1704)
and the Program for Innovative Research Team in Science and Technology
in Fujian Province University (IRTSTFJ).

\address{
Yuichi Shiozawa\\
Department of Mathematics \\
Graduate School of Science\\
Osaka University \\
Toyonaka 560-0043,
Japan
}
{\texttt{shiozawa@math.sci.osaka-u.ac.jp}}

\address{
Jian Wang\\
College of Mathematics and Informatics\\
 Fujian Key Laboratory of Mathematical
Analysis and Applications (FJKLMAA)\\
Center for Applied Mathematics of Fujian Province (FJNU)\\
Fujian Normal University\\
350007 Fuzhou, P.R. China
}
{\texttt{jianwang@fjnu.edu.cn}}
\end{document}